\newcommand{\abs}[1]{\left |#1\right |}
\newcommand{\norm}[1]{\left \|#1\right\|}
\newcommand{\paren}[1]{\left(#1\right)}
\newcommand{\bracket}[1]{\left[#1\right]}
\newcommand{\inner}[2]{\left\langle #1,#2\right\rangle}
\newcommand{\set}[1]{\left\{#1\right\}}
\newcommand{\mean}[1]{\left\langle#1\right\rangle}
\newcommand{\C}{\mathbb{C}}
\newcommand{\R}{\mathbb{R}}
\newcommand{\dt}{\Delta {t}}
\newcommand{\bb}{\mathbf{b}}
\newcommand{\bF}{\mathbf{F}}
\newcommand{\bu}{\mathbf{u}}
\newcommand{\bv}{\mathbf{v}}
\newcommand{\calM}{\mathcal{M}}
\newcommand{\calH}{\mathcal{H}}
\newtheorem{thm}{Theorem}[section]
\newtheorem{cor}[thm]{Corollary}
\newtheorem{lem}[thm]{Lemma}
\newtheorem{prop}[thm]{Proposition}
\newtheorem{rem}[thm]{Remark}
\renewcommand{\Im}{{\mathrm{Im}}}
\renewcommand{\Re}{{\mathrm{Re}}}
\DeclareMathOperator{\bigo}{O}
\numberwithin{equation}{section}
\title{Conservative Integrators for a  Toy Model of  Weak Turbulence}
\author{Aquil D. Jones}
\author{Gideon Simpson}
\address{Department of Mathematics
, Drexel University, Philadelphia, PA, USA}
\email{simpson@math.dexel.edu}
\author{William Wilson}
\thanks{ADJ and GS were supported by the US National
  Science Foundation grant DMS-1409018. Numerical work reported here
  was partially run on hardware supported by Drexel's University
  Research Computing Facility.  GS thanks J.L. Marzuola for
  helpful discussions on this project.}
\subjclass[2010]{35Q55,34A33,65P10,65L20}
\keywords{nonlinear Schr\"odinger, weak turbulence, conservative integrators}
\begin{document}

\begin{abstract}
  Weak turbulence is a phenomenon by which a system generically
  transfers energy from low to high wave numbers, while persisting for
  all finite time.  It has been conjectured by Bourgain and others
  that the 2D defocusing nonlinear Schr\"odinger equation (NLS) on the
  torus has this dynamic, and several analytical and numerical studies
  have worked towards addressing this point.

  In the process of studying the conjecture, Colliander, Keel,
  Staffilani, Takaoka, and Tao introduced a ``toy model'' dynamical
  system as an approximation of NLS, which has been subsequently
  studied numerically.  In this work, we formulate and examine several
  numerical schemes for integrating this model equation.  The model
  has two invariants, and our schemes aim to conserve at least one of
  them.  We prove convergence in some cases, and our numerical studies
  show that the schemes compare favorably to others, such as
  Trapezoidal Rule and fixed step fourth order Runge-Kutta.  The
  preservation of the invariants is particularly important in the
  study of weak turbulence as the energy transfer tends to occur on
  long time scales.
\end{abstract}

\maketitle

\section{Introduction}
\label{s:introduction}

In \cite{Colliander:2010wz}, the 2D, defocusing, cubic, toroidal
nonlinear Schr\"odinger equation (NLS),
\begin{eqnarray}
  \label{e:nls}
  i u_t + \Delta u - |u|^2 u = 0, \ \ u(0,x) = u_0(x) \ \text{for} \ x \in \mathbb{T}^2,
\end{eqnarray}
was approximated by a ``Toy Model System'' given by the equation
\begin{equation}
  \label{e:toy}
  -i\dot b_j = -\abs{b_j}^2 b_j + 2 b_{j-1}^2 \overline{b}_j
  + 2 b_{j+1}^2 \overline{b}_j,\quad \text{for $j=1\ldots N$}
\end{equation}
with boundary conditions
\begin{equation}
  \label{e:bcs}
  b_0(t) = b_{N+1}(t) = 0.
\end{equation}
Subject to these boundary conditions, \eqref{e:toy} conserves
$\ell^2$,
\begin{equation}
  \label{e:mass}
  \calM[\bb(t)] = \sum_{j=1}^N |b_j(t)|^2,
\end{equation}
and the Hamiltonian
\begin{equation}
  \label{e:eng}
  \calH[\bb(t)] = \sum_{j=1}^N \left( \frac14 | b_j(t) |^4 - \text{Re}
    ( \bar{b}_j(t)^2 b_{j-1}(t)^2) \right). 
\end{equation}
Indeed, the flow of \eqref{e:toy} can be expressed as
\begin{equation}
  \label{e:flow}
  i \dot b_j = 2 \frac{\partial \calH[\bb]}{\partial \bar b_j}.
\end{equation}
In this way, we can interpret \eqref{e:toy} as a Hamiltonian system of
nonlinearly and degenerately coupled oscillators.

\subsection{Weak Turbulence}

Roughly, $|b_j(t)|^2$ measures the spectral energy of $u$, the
solution to \eqref{e:nls}, on a set of wave numbers, $\Lambda_j$.  The
sets $\Lambda_j$ are tailored to have the property that the largest
wave number in $\Lambda_{j+1}$ exceeds the largest wave number in
$\Lambda_j$.  Thus, larger values of $|b_j|^2$ at larger values of $j$
correspond to more energy of $u$ in higher wave numbers.

The motivation for the approximation of \eqref{e:nls} by \eqref{e:toy}
was to explore a long-standing hypothesis that
\eqref{e:nls} could capture the phenomenon of weak turbulence;
see \cite{Bourgain:2004en,Dyachenko:1992bl,Colliander:2010wz} and
references therein.  Loosely speaking, a weakly turbulent system
exists globally in time, yet continuously propagates energy to ever
higher frequencies.  Thus, the norms tend to infinity, but are finite
at all finite times.  Another model equation for weak turbulence was
formulated by Majda, McLaughlin, and Tabak,
\cite{Cai:1999vx,Cai:2000kh,Cai:2001jm,Dyachenko:1992bl,Majda:1997vl,Zakharov:2001td}.

In \cite{Colliander:2010wz}, the authors proved that, given $N$, they
could construct a solution of \eqref{e:toy} which would propagate
energy from $|b_j|^2$ at low index to high index $j$.  This
corresponds to a transfer of energy in \eqref{e:nls}, and, subject to
rigorous analysis of the approximation, demonstrated that such energy
cascades were present.  However, this did not show that energy
transfer in \eqref{e:nls} was a generic phenomenon, an essential
feature of weak turbulence. Analysis of this problem has continued in
the recent works \cite{Faou:2016fo,Hani:2013kx}.  Separately, in
\cite{Colliander:2013hz,herr2013discrete}, \eqref{e:toy} was
numerically simulated and observed to have such energy transfers for a
variety of initial conditions for the lifespan of the simulations.

\subsection{Relation to Previous Work}

In \cite{Colliander:2013hz,herr2013discrete}, \eqref{e:toy} was
integrated using high order, adaptive, Runge-Kutta (RK) integrators.
While the RK integrators gave high quality results for the duration of
the simulations, they are unable to exactly conserve either of the two
invariants.\footnote{Conservation is only up to floating point error,
  which we do not consider here.}  At the same time, the energy
transfer in the toy model system is slow, requiring integration out to
long times.  Thus, the RK integrators require significant
computational effort to observe weak turbulence -- small steps are
needed for accuracy, but the phenomenological time of integration is
long.

Given the interest in \eqref{e:toy}, the goal of this work is to
present conservative methods that may aid in statistical studies of
weak turbulence and other long time integration problems.  The methods
presented here are second order in the time step, $\Delta t$.  In
numerical experiments, we observe that comparatively large time steps
can be taken with these schemes for exploring weak turbulence.  While
pointwise accuracy is lost with large $\dt$, the average energy
transfer appears robust.  In contrast, fixed time step RK schemes
will, eventually, cease to provide accurate output.

A variety of strategies have been proposed for integrating Hamiltonian
systems so as to preserve the invariants of the equations, including
splitting methods, projection methods, symplectic methods, and the
average vector field method,
\cite{Faou:2012gb,Hairer:2006vg,Celledoni:2012ff}.  In this work, we
explore some conservative discretizations of \eqref{e:toy} which
preserve either \eqref{e:mass} or \eqref{e:eng}, including implicit
midpoint.  Some appear to be {\it ad hoc} and not from one of the
aforementioned known discretization strategies, instead taking their
inspiration from known discretizations of NLS,
\cite{Matsuo:2001dc,Delfour:1981dv}.  We also direct the reader to the
recent work in \cite{Tao:2016cw}, where the author tested an explicit
symplectic integration scheme on \eqref{e:toy} with a small number of
nodes, $N=5$.

\subsection{Outline}

Our paper is organized as follows.  In Section \ref{s:integrators} we
formulate out schemes.  Next, in Section \ref{s:convergence}, we prove
a number of results on the the integrators.  Numerical simulations are
presented in Section \ref{s:numerics}, and we discuss our results in
Section \ref{s:discussion}.

\section{Conservative Integrators}
\label{s:integrators}

In this section, we formulate our discretizations and prove that they
conserve the relevant invariant, under a solvability assumption.
These schemes are all symmetric, but the nonlinear terms are treated
differently in each case.  Since the dependent variables of
\eqref{e:toy} are nonlinearly and degenerately coupled, formulating a
conservative scheme is nontrivial.  This is contrast to NLS, where the
spatial coupling, the analog of the lattice site coupling of
\eqref{e:toy}, is linear.

Before stating the schemes, we introduce some notation.  Throughout,
we assume the time step, $\dt$, is constant.  We will solve at times
$t_n = n \Delta t$, and our approximation will be
$\bb_n \approx \bb(t_n)$, with components $b_{j,n} \approx b_j(t_n)$.
We define the following linear and nonlinear averages as:
\begin{subequations}
  \label{e:notation}
  \begin{align}
    b_{j,n} &\approx b_j(t_n)\\
    b_{j,n+1/2} & = \tfrac{1}{2}\paren{  b_{j,n} + b_{j,n+1}}\\
    b_{j,n+1/2}^2 & = \bracket{\tfrac{1}{2}\paren{  b_{j,n} + b_{j,n+1}}}^2\\
    (b^2)_{j,n+1/2} & = \tfrac{1}{2}\paren{  b_{j,n}^2 + b_{j,n+1}^2}\\
    \abs{b}^2_{j,n+1/2} &= \tfrac{1}{2} \paren{\abs{b_{j,n}}^2 + \abs{b_{j,n+1}}^2  }
  \end{align}
\end{subequations}
Using this notation, the trapezoidal method corresponds to
\begin{equation}
  \label{e:trap}
  \begin{split}
    {\bf b}_{n+1} &= {\bf b}_n + \dt {\bf F}^{\rm Trap}(\bb_n,
    \bb_{n+1};\dt)\\
    F_j^{\rm Trap} & =\frac{i}{2}\left[-\abs{b_{j,n}}^2 b_{j,n} + 2
      \overline{b}_{j,n}(b_{j-1,n}^2 +b_{j+1,n}^2 )\right]\\
    &\quad + \frac{i}{2}\left[-\abs{b_{j,n+1}}^2 b_{j,n+1} + 2
      \overline{b}_{j,n+1}(b_{j-1,n+1}^2 + b_{j+1,n+1}^2 )\right]
  \end{split}
\end{equation}
while the implicit midpoint method corresponds to
\begin{equation}
  \label{e:midpoint}
  \begin{split}
    {\bf b}_{n+1} &= {\bf b}_n + \dt {\bf F}^{\rm Midpt}(\bb_n,
    \bb_{n+1};\dt)\\
    F_j^{\rm Midpt} & =- i|b_{j,n+1/2}|^2b_{j,n+1/2} + 2 i\bar
    b_{j,n+1/2} (b_{j-1,n+1/2}^2 + b_{j+1,n+1/2}^2)
  \end{split}
\end{equation}

While the results we present in this section are for the Dirichlet
type boundary conditions \eqref{e:bcs}, it can be verified that they
also apply to the periodic boundary condition case,
\begin{equation}
  \label{e:periodic}
  b_0(t) = b_{N}(t), \quad b_{N+1}(t) = b_1(t).
\end{equation}

\subsection{Mass Preserving Integrators}
\label{s:mass_integrator}

We first note that the implicit midpoint method, given by
\eqref{e:midpoint}, conserves \eqref{e:mass}:
\begin{prop}
  \label{p:midpoint}
  Assume that the nonlinear algebraic system in \eqref{e:midpoint} can
  be solved exactly for $b_{j,n+1}$.  Then \eqref{e:mass} is exactly
  conserved.
\end{prop}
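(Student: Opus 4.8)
The plan is to exploit the special structure of the midpoint rule — that the increment $\bb_{n+1}-\bb_n$ equals $\dt$ times a function of the single midpoint vector $\bb_{n+1/2}$ alone — together with the fact that $\calM$ is a Hermitian quadratic form which is conserved by the true flow of \eqref{e:toy}. Concretely, write $c_j := b_{j,n+1/2} = \tfrac12(b_{j,n}+b_{j,n+1})$, so that the scheme \eqref{e:midpoint} reads $b_{j,n+1}-b_{j,n} = \dt\,F_j^{\rm Midpt}$ with
\begin{equation*}
  F_j^{\rm Midpt} = -i\abs{c_j}^2 c_j + 2i\,\bar c_j\paren{c_{j-1}^2 + c_{j+1}^2},
\end{equation*}
which is well defined once solvability for $b_{j,n+1}$ is assumed.

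First I would record the elementary identity
\begin{equation*}
  \abs{b_{j,n+1}}^2 - \abs{b_{j,n}}^2 = 2\,\Re\bracket{(b_{j,n+1}-b_{j,n})\,\bar c_j},
\end{equation*}
obtained by expanding the right-hand side and cancelling the two cross terms, which are complex conjugates of each other. Substituting the update rule and summing over $j=1,\dots,N$ then gives
\begin{equation*}
  \calM[\bb_{n+1}] - \calM[\bb_n] = 2\dt\,\Re\sum_{j=1}^N F_j^{\rm Midpt}\,\bar c_j,
\end{equation*}
so the whole claim reduces to showing that $\sum_{j} F_j^{\rm Midpt}\bar c_j$ is purely imaginary.

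Next I would compute, using the display above,
\begin{equation*}
  F_j^{\rm Midpt}\,\bar c_j = -i\abs{c_j}^4 + 2i\,\bar c_j^{\,2}\paren{c_{j-1}^2 + c_{j+1}^2}.
\end{equation*}
The diagonal term $-i\abs{c_j}^4$ is manifestly imaginary and contributes nothing to the real part. For the remaining contribution I would perform a discrete summation by parts: shifting the index in the $c_{j+1}^2$ piece and using the boundary conditions \eqref{e:bcs} (which force $c_0 = c_{N+1}=0$, so no boundary terms appear) collapses the sum to $\sum_j\paren{\bar c_j^{\,2}c_{j-1}^2 + \bar c_{j-1}^{\,2}c_j^2} = 2\sum_j \Re\bracket{\bar c_j^{\,2}c_{j-1}^2}$, which is real. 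Hence $2i$ times it is purely imaginary, the real part in the difference above vanishes, and $\calM[\bb_{n+1}]=\calM[\bb_n]$.

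The argument is essentially bookkeeping, and the one step I would treat carefully is the summation by parts: verifying that \eqref{e:bcs} genuinely annihilates the terms produced by the index shift (and, for the periodic case \eqref{e:periodic}, that the wrap-around terms match up so that the same cancellation persists). One could alternatively invoke the general principle that the implicit midpoint rule preserves every quadratic first integral of an ODE, but I find the direct computation cleaner here and it makes the role of the degenerate lattice coupling explicit.
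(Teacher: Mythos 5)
Your proof is correct, but it takes a different route from the paper's. The paper disposes of this proposition in three lines: it observes that $\calM$ is a quadratic invariant of the flow, notes that implicit midpoint is a symplectic Runge--Kutta method, and invokes the classical result of Cooper that such methods conserve all quadratic first integrals — exactly the ``general principle'' you mention and set aside in your closing paragraph. Your direct computation — pairing the increment with $\bar c_j$, using the polarization identity $\abs{b_{j,n+1}}^2 - \abs{b_{j,n}}^2 = 2\Re\bracket{(b_{j,n+1}-b_{j,n})\bar c_j}$, and then summing by parts with $c_0 = c_{N+1} = 0$ — is sound, and it closely mirrors the argument the paper actually uses for the \emph{modified} midpoint scheme \eqref{e:mass_scheme} in Proposition \ref{p:mass}, where Cooper's theorem is unavailable because that scheme is not a standard Runge--Kutta method. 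What your approach buys is a self-contained verification that exposes the cancellation mechanism in the degenerate nearest-neighbor coupling (and extends transparently to the periodic case \eqref{e:periodic}); what the paper's approach buys is brevity and the placement of the result inside the standard geometric-integration framework, which also explains the near-conservation of a modified Hamiltonian discussed later in the numerics. The one bookkeeping point you rightly flag — that the discrete boundary conditions force $c_0=c_{N+1}=0$ so the index shift produces no boundary terms — is handled identically in the paper's proof of Proposition \ref{p:mass}.
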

\begin{proof}
  This is a consequence of the mass invariant being quadratic which
  can be expressed as
  \begin{equation*}
    \mathcal{M}[\bb] = \inner{\bb}{\bb} = \Re \sum_{j=1}^N (b_j \bar b_j).
  \end{equation*}
  By virtue of conservation of mass,
  \begin{equation*}
    \inner{\bb}{\frac{d\bb}{dt}} = \Re \sum_{j=1}^N \bar b_j \frac{d b_j}{dt} = 0.
  \end{equation*}
  The conservation for the scheme, being symplectic, then follows from
  a result due to Cooper, \cite{Cooper:1987ea,Hairer:2006vg}.
\end{proof}

The following modified midpoint method also preserves mass:
\begin{equation}
  \label{e:mass_scheme}
  \begin{split}
    9    \bb_{n+1} & = \bb_n + \dt {\bf F}^{\rm Mass}(\bb_n, \bb_{n+1};\dt)\\
    F_{j}^{\rm Mass}& = -i\abs{b}^2_{j,n+1/2} b_{j,n+1/2} + 2 i\bar
    b_{j,n+1/2}\paren{ b_{j+1,n+1/2}^2 +b_{j-1,n+1/2}^2 }
  \end{split}
\end{equation}
Note the difference in the self-interaction schemes of
\eqref{e:mass_scheme} and \eqref{e:midpoint},
\[
  - \abs{b}^2_{j,n+1/2} b_{j,n+1/2} \quad \text{vs} \quad
  -|b_{j,n+1/2}|^2b_{j,n+1/2}.
\]
This treatment of the nonlinear term in \eqref{e:mass_scheme} is
modeled upon the analogous expression for some conservative NLS
schemes, \cite{Matsuo:2001dc,Delfour:1981dv}.

\begin{prop}
  \label{p:mass}
  Assume that the nonlinear algebraic system in \eqref{e:mass_scheme}
  can be solved exactly for $b_{j,n+1}$.  Then \eqref{e:mass} is
  exactly conserved.
\end{prop}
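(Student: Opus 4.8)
The plan is to show directly that $\calM[\bb_{n+1}] = \calM[\bb_n]$ by computing the increment $\calM[\bb_{n+1}] - \calM[\bb_n]$ and using the structure of $F_j^{\rm Mass}$. First I would write $\calM[\bb_{n+1}] - \calM[\bb_n] = \sum_{j} \paren{\abs{b_{j,n+1}}^2 - \abs{b_{j,n}}^2}$ and note the telescoping identity $\abs{b_{j,n+1}}^2 - \abs{b_{j,n}}^2 = \paren{b_{j,n+1} - b_{j,n}}\overline{b_{j,n+1}+b_{j,n}}/2 + \text{c.c.}$, i.e.
\[
  \abs{b_{j,n+1}}^2 - \abs{b_{j,n}}^2 = 2\Re\bracket{\overline{b_{j,n+1/2}}\,\paren{b_{j,n+1} - b_{j,n}}}.
\]
Substituting the scheme $b_{j,n+1} - b_{j,n} = \dt\, F_j^{\rm Mass}$, the increment becomes $2\dt \sum_j \Re\bracket{\overline{b_{j,n+1/2}}\, F_j^{\rm Mass}}$, so it suffices to show this sum vanishes.

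Next I would expand $\sum_j \Re\bracket{\overline{b_{j,n+1/2}}\, F_j^{\rm Mass}}$ using the definition of $F_j^{\rm Mass}$. The self-interaction term contributes $\Re\bracket{-i\abs{b}^2_{j,n+1/2}\abs{b_{j,n+1/2}}^2} = 0$, since $\abs{b}^2_{j,n+1/2}$ is real and $\abs{b_{j,n+1/2}}^2$ is real, and $\Re(i r) = 0$ for real $r$; this is exactly the reason the modified self-interaction term is used rather than a cubic average. The coupling terms contribute $2\Re\bracket{i\,\overline{b_{j,n+1/2}}\,\bar b_{j,n+1/2}\paren{b_{j+1,n+1/2}^2 + b_{j-1,n+1/2}^2}} = 2\Re\bracket{i\,\bar b_{j,n+1/2}^2\paren{b_{j+1,n+1/2}^2 + b_{j-1,n+1/2}^2}}$. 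Summing over $j$ and shifting the index in the $b_{j-1,n+1/2}^2$ piece (using the boundary conditions \eqref{e:bcs}, which hold at both time levels and hence for the half-step average), the two coupling sums combine into $2\Re\sum_j i\paren{\bar b_{j,n+1/2}^2\, b_{j+1,n+1/2}^2 + b_{j,n+1/2}^2\, \bar b_{j+1,n+1/2}^2} = 2\Re\sum_j i\cdot 2\Re\paren{\bar b_{j,n+1/2}^2 b_{j+1,n+1/2}^2}$, which is $\Re$ of $i$ times a real number, hence zero.

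The main obstacle, such as it is, is bookkeeping: carefully pairing the $b_{j+1}$ coupling at site $j$ with the $b_{j-1}$ coupling at site $j+1$ via an index shift, and checking that the boundary terms generated by the shift vanish under \eqref{e:bcs} (equivalently under \eqref{e:periodic}). Once the coupling sum is written as $\sum_j i\paren{z_j \bar w_j + \bar z_j w_j}$ with $z_j = \bar b_{j,n+1/2}^2$, $w_j = \bar b_{j+1,n+1/2}^2$, reality of $z_j\bar w_j + \bar z_j w_j$ makes the imaginary-unit prefactor kill the real part, and the proof is complete. No solvability is needed beyond the hypothesis that $\bb_{n+1}$ exists, since the argument only uses the update relation $b_{j,n+1} - b_{j,n} = \dt F_j^{\rm Mass}$ componentwise.
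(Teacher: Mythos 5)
Your proposal is correct and follows essentially the same route as the paper's proof: multiplying the update by $\bar b_{j,n+1/2}$ and taking real parts (your identity $\abs{b_{j,n+1}}^2-\abs{b_{j,n}}^2=2\Re[\overline{b_{j,n+1/2}}(b_{j,n+1}-b_{j,n})]$ is exactly the paper's step with the purely imaginary cross terms discarded), then observing that the self-interaction term is $i$ times a real quantity and that the coupling terms cancel after an index shift using the boundary conditions. No discrepancies to report.
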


\begin{proof}
  Multiplying the $j$- th equation of\eqref{e:mass_scheme} by
  $\bar b_{j,n+1/2}$, we have
  \begin{equation*}
    \begin{split}
      &{\abs{b_{j,n+1}}^2 - \abs{b_{j,n}}^2 - b_{j,n} \bar b_{j,n+1} +
        \bar
        b_{j,n} b_{j,n+1}} \\
      &\quad = -2 i \Delta t \abs{b}_{j, n+1/2}^2\abs{b_{j, n+1/2}}^2
      + 4 i \Delta t \bar b_{j,n+1/2}^2 \paren{ b_{j+1,n+1/2}^2 +
        b_{j-1,n+1/2}^2}
    \end{split}
  \end{equation*}
  Taking the real part of this equation, we obtain
  \begin{equation*}
    \begin{split}
      &\abs{b_{j,n+1}}^2 - \abs{b_{j,n}}^2 \\
      &\quad =4\Delta t \Re \set{ i \bar b_{j,n+1/2}^2 b_{j+1,n+1/2}^2
        + i \bar b_{j,n+1/2}^2 b_{j-1,n+1/2}^2 },
    \end{split}
  \end{equation*}
  and summing over $j$,
  \begin{equation*}
    \begin{split}
      &\sum_{j=1}^{N}\abs{b_{j,n+1}}^2 - \abs{b_{j,n}}^2  \\
      &\quad= 4\Delta t \sum_{j=1}^N \Re \set{ i \bar b_{j,n+1/2}^2
        b_{j+1,n+1/2}^2 + i \bar b_{j,n+1/2}^2 b_{j-1,n+1/2}^2 }.
    \end{split}
  \end{equation*}
  Shifting indices on the second term and using that
  $b_0 =b_{N+1} = 0$, we have
  \begin{equation*}
    \begin{split}
      &\sum_{j=1}^N \Re \set{ i \bar b_{j,n+1/2}^2 b_{j+1,n+1/2}^2 + i
        \bar
        b_{j,n+1/2}^2  b_{j-1,n+1/2}^2 }\\
      &\quad =\sum_{j=1}^N \Re \set{ i \bar b_{j,n+1/2}^2
        b_{j+1,n+1/2}^2 + i \bar
        b_{j+1,n+1/2}^2  b_{j,n+1/2}^2 }\\
      &\quad = \sum_{j=1}^N \Re \set{i 2\Re \paren{\bar b_{j,n+1/2}^2
          b_{j+1,n+1/2}^2}} = 0.
    \end{split}
  \end{equation*}
  Therefore,
  \[
    \sum_{j=1}^N \abs{b_{j,n+1}}^2 =\sum_{j=1}^N \abs{b_{j,n}}^2.
  \]

\end{proof}

\subsection{Hamiltonian Preserving Integrator}
\label{s:neg_integrator}

One numerical scheme which exactly preserves the energy,
\eqref{e:eng}, is:
\begin{equation}
  \label{e:eng_scheme}
  \begin{split}
    \bb_{n+1} & = \bb_n + \dt \bF^{\rm Eng}(\bb_n, \bb_{n+1};\dt)\\
    F^{\rm Eng}_j & = - i\abs{b}^2_{j,n+1/2} b_{j,n+1/2} +2i \bar
    b_{j,n+1/2}\bracket{ (b^2)_{j+1,n+1/2} +(b^2)_{j-1,n+1/2}}
  \end{split}
\end{equation}
Note how the $j$-$j+1$ and $j$-$j-1$ interaction terms are handled
differently than in \eqref{e:trap}, \eqref{e:midpoint}, and
\eqref{e:mass_scheme}.  This scheme, which we show to exactly preserve
\eqref{e:eng}, appears to be {\it ad hoc}, and is not based upon a
known methodology, such as average vector field, for deriving schemes
which conserve the energy.

\begin{prop}
  Assume that the nonlinear algebraic system in \eqref{e:eng_scheme}
  can be solved exactly for $b_{j,n+1}$.  Then \eqref{e:eng} is
  exactly conserved.
\end{prop}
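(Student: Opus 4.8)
The plan is to establish a discrete counterpart of the continuous energy identity. Writing $\Delta\calH := \calH[\bb_{n+1}] - \calH[\bb_n]$ and splitting $\calH = \calH_1 + \calH_2$ with $\calH_1 = \sum_j \tfrac14\abs{b_j}^4$ and $\calH_2 = -\sum_j \Re(\bar b_j^2 b_{j-1}^2)$, I would rewrite $\Delta\calH$ so that the scheme increment $b_{j,n+1} - b_{j,n} = \dt\, F_j^{\rm Eng}$ appears explicitly in every term, reaching a form $\Delta\calH = \dt\sum_{j=1}^N \Re\bracket{F_j^{\rm Eng}\,\overline{G_j}}$ for a suitable ``discrete gradient'' $G_j$. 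The particular averaging used in \eqref{e:eng_scheme} --- notably $(b^2)_{j\pm1,n+1/2}$ in the coupling terms rather than $b_{j\pm1,n+1/2}^2$ --- is precisely what makes $G_j = i\,F_j^{\rm Eng}$, whence $\overline{G_j} = -i\,\overline{F_j^{\rm Eng}}$ and each summand is $\Re\bracket{-i\,\abs{F_j^{\rm Eng}}^2} = 0$.

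For $\calH_1$ I would use the elementary identities $\abs{b_{j,n+1}}^2 - \abs{b_{j,n}}^2 = 2\Re\bracket{(b_{j,n+1}-b_{j,n})\,\bar b_{j,n+1/2}}$ and $\abs{b_{j,n+1}}^4 - \abs{b_{j,n}}^4 = 2\,\abs{b}^2_{j,n+1/2}\paren{\abs{b_{j,n+1}}^2 - \abs{b_{j,n}}^2}$, which combine to
\[
  \tfrac14\paren{\abs{b_{j,n+1}}^4 - \abs{b_{j,n}}^4} = \Re\bracket{(b_{j,n+1}-b_{j,n})\,\abs{b}^2_{j,n+1/2}\,\bar b_{j,n+1/2}},
\]
already exhibiting $\abs{b}^2_{j,n+1/2}b_{j,n+1/2}$, the self-interaction part of $F_j^{\rm Eng}$.

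The bulk of the work is $\calH_2$. Here I would apply the discrete Leibniz rule $A_{n+1}B_{n+1} - A_nB_n = (A_{n+1}-A_n)\tfrac12(B_{n+1}+B_n) + \tfrac12(A_{n+1}+A_n)(B_{n+1}-B_n)$ with $A_n = \bar b_{j,n}^2$ and $B_n = b_{j-1,n}^2$, then factor $\bar b_{j,n+1}^2 - \bar b_{j,n}^2 = 2\,\bar b_{j,n+1/2}(\bar b_{j,n+1}-\bar b_{j,n})$ (and its analogue for $b_{j-1}$), and note $\tfrac12(\bar b_{j,n}^2 + \bar b_{j,n+1}^2) = \overline{(b^2)_{j,n+1/2}}$, $\tfrac12(b_{j-1,n}^2 + b_{j-1,n+1}^2) = (b^2)_{j-1,n+1/2}$. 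Summing over $j$ yields two families of terms; in the family carrying the increment of $b_{j-1}$ I would shift $j \mapsto j+1$ and use $b_0 = b_{N+1} = 0$ --- verifying that the spurious $j=0$ and $j=N$ terms vanish, exactly as in the proof of Proposition~\ref{p:mass} (this is also why the argument passes unchanged to \eqref{e:periodic}) --- so as to re-index it over $j = 1,\dots,N$, now paired with $\overline{(b^2)_{j+1,n+1/2}}$. After taking real parts, using $\Re\bracket{(\bar b_{j,n+1}-\bar b_{j,n})z} = \Re\bracket{(b_{j,n+1}-b_{j,n})\bar z}$, and inserting $b_{j,n+1}-b_{j,n} = \dt F_j^{\rm Eng}$, this becomes
\[
  \Delta\calH_2 = -\dt\sum_{j=1}^N \Re\bracket{F_j^{\rm Eng}\cdot 2\,b_{j,n+1/2}\paren{\overline{(b^2)_{j-1,n+1/2}} + \overline{(b^2)_{j+1,n+1/2}}}}.
\]

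Summing the $\calH_1$ identity over $j$ (again inserting $b_{j,n+1}-b_{j,n} = \dt F_j^{\rm Eng}$) and adding $\Delta\calH_2$, the factor multiplying $F_j^{\rm Eng}$ is $\overline{G_j}$ with $G_j = \abs{b}^2_{j,n+1/2}b_{j,n+1/2} - 2\,\bar b_{j,n+1/2}\paren{(b^2)_{j-1,n+1/2} + (b^2)_{j+1,n+1/2}}$. Comparing with \eqref{e:eng_scheme} gives $G_j = i\,F_j^{\rm Eng}$, hence $\Delta\calH = \dt\sum_j \Re\bracket{F_j^{\rm Eng}\cdot(-i\,\overline{F_j^{\rm Eng}})} = \dt\sum_j \Re\bracket{-i\,\abs{F_j^{\rm Eng}}^2} = 0$, so $\calH[\bb_{n+1}] = \calH[\bb_n]$. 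The one genuinely delicate step is the index-shift and boundary-term bookkeeping in $\Delta\calH_2$; once the telescoping identities are in place, the remaining algebra and the concluding termwise cancellation are routine.
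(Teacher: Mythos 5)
Your argument is correct, and it is in essence the paper's own proof repackaged: the paper multiplies the $j$-th equation of \eqref{e:eng_scheme} by $\bar b_{j,n+1}-\bar b_{j,n}$ and takes imaginary parts of the resulting identity (the left side $\abs{b_{j,n+1}-b_{j,n}}^2$ being real), which after the same difference-of-squares factorizations, index shifts, and use of $b_0=b_{N+1}=0$ yields exactly your identity $\Delta\calH=\dt\sum_j\Re\bracket{-i\abs{F_j^{\rm Eng}}^2}=0$. Your discrete-gradient framing makes the mechanism more transparent, but the decomposition, the key algebraic identities, and the boundary-term bookkeeping coincide with the paper's.
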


\begin{proof}
  Multiplying the $j$-th equation of \eqref{e:eng_scheme} by
  $\bar b_{j,n+1} - \bar b_{j,n}$, we have
  \begin{equation*}
    \begin{split}
      \abs{ b_{j,n+1} - b_{j,n}}^2 &= -i \Delta t \abs{b}^2_{j,n+1/2}
      b_{j,n+1/2} (\bar b_{j,n+1} - \bar b_{j,n})\\
      & \quad + 2 i \Delta t \bar b_{j,n+1/2}\bracket{
        (b^2)_{j+1,n+1/2} +
        (b^2)_{j-1,n+1/2}}(\bar b_{j,n+1} - \bar b_{j,n})\\
      & = -i \frac{\Delta t }{4} (\abs{b_{j,n+1}}^4 - \abs{b_{j,n}}^4
      ) +\Delta t\abs{b}^2_{j,n+1/2} \Im(
      b_{j,n}\bar b_{j,n+1})\\
      & \quad + i \Delta t \bracket{ (b^2)_{j+1,n+1/2}
        +(b^2)_{j-1,n+1/2} }\bracket{\bar b^2_{j,n+1} - \bar b^2_{j,n}
      }
    \end{split}
  \end{equation*}
  Taking the imaginary parts and dividing out by $\Delta t$,
  \begin{equation}
    \label{e:im1_eng}
    \begin{split}
      0 &= \tfrac{1 }{4}  (\abs{b_{j,n+1}}^4 - \abs{b_{j,n}}^4 )  \\
      &\quad -\Im \set{i \bracket{ (b^2)_{j+1,n+1/2}
          +(b^2)_{j-1,n+1/2} }\bracket{\bar b^2_{j,n+1} - \bar
          b^2_{j,n} }}
    \end{split}
  \end{equation}
  Examining the second term,
  \begin{equation*}
    \begin{split}
      &\bracket{ (b^2)_{j+1,n+1/2} +(b^2)_{j-1,n+1/2}}\bracket{\bar
        b^2_{j,n+1} - \bar b^2_{j,n} } \\
      &=\frac{1}{2}\left [ b^2_{j+1,n+1} \bar b^2_{j,n+1} -
        b^2_{j+1,n+1} \bar b^2_{j,n} + b^2_{j+1,n} \bar b^2_{j,n+1} -
        b^2_{j+1,n}
        \bar b^2_{j,n}  \right .\\
      &\quad \left.+ b^2_{j-1,n+1} \bar b^2_{j,n+1} - b^2_{j-1,n+1}
        \bar b^2_{j,n} + b^2_{j-1,n} \bar b^2_{j,n+1} - b^2_{j-1,n}
        \bar b^2_{j,n} \right ]
    \end{split}
  \end{equation*}
  Summing from $j=1,\ldots, N$, and using $b_0 = b_{N+1} = 0$ to shift
  indices,
  \begin{equation*}
    \begin{split}
      &\sum_{j=1}^N \frac{1}{2}\left [b^2_{j+1,n+1} \bar b^2_{j,n+1} -
        b^2_{j+1,n+1} \bar b^2_{j,n} + b^2_{j+1,n} \bar b^2_{j,n+1} -
        b^2_{j+1,n}
        \bar b^2_{j,n} \right.  \\
      &\quad \left. + b^2_{j-1,n+1} \bar b^2_{j,n+1} - b^2_{j-1,n+1}
        \bar b^2_{j,n} + b^2_{j-1,n} \bar b^2_{j,n+1} - b^2_{j-1,n}
        \bar b^2_{j,n} \right]\\
      & = \sum_{j=1}^N \frac{1}{2}\left[ b^2_{j+1,n+1} \bar
        b^2_{j,n+1} - b^2_{j+1,n+1} \bar b^2_{j,n} + b^2_{j+1,n} \bar
        b^2_{j,n+1} - b^2_{j+1,n}
        \bar b^2_{j,n} \right. \\
      &\quad \left.  + b^2_{j,n+1} \bar b^2_{j+1,n+1} - b^2_{j,n+1}
        \bar b^2_{j+1,n} + b^2_{j,n} \bar b^2_{j+1,n+1} - b^2_{j,n}
        \bar b^2_{j+1,n} \right]\\
      & = \sum_{j=1} \Re \bracket{b^2_{j+1,n+1} \bar b^2_{j,n+1} -
        b^2_{j+1,n} \bar b^2_{j,n} } + i \Im \bracket{ b^2_{j,n} \bar
        b^2_{j+1,n+1} + b^2_{j+1,n} \bar b^2_{j,n+1} }
    \end{split}
  \end{equation*}
  Summing \eqref{e:im1_eng} over $j$, and using the preceding
  calculation,
  \begin{equation*}
    0 = \sum_{j=1}^N \frac{1 }{4}  (\abs{b_{j,n+1}}^4 - \abs{b_{j,n}}^4 )
    - \sum_{j=1}^N  \Re  \bracket{b^2_{j+1,n+1} \bar b^2_{j,n+1} - b^2_{j+1,n}
      \bar b^2_{j,n}  }
  \end{equation*}
  Therefore,
  \begin{equation*}
    \sum_j \frac{1 }{4} \abs{b_{j,n+1}}^4  - \Re (b^2_{j+1,n+1} \bar
    b^2_{j,n+1} ) = \sum_j \frac{1 }{4} \abs{b_{j,n}}^4 - \Re ( b^2_{j+1,n}
    \bar b^2_{j,n})
  \end{equation*}
  Shifting indices and using $b_0 = b_{N+1} = 0$ again yields the
  conservation of the energy, \eqref{e:eng}, in the discretized
  evolution.

  Again, a similar calculation holds in the case of periodic boundary
  conditions, $b_0 = b_{N}$ and $b_{N+1} = b_1$.
\end{proof}

\section{Convergence of Integrators}
\label{s:convergence}
In this section we examine the convergence of our integrators.  We
provide a complete proof in the case of mass preserving schemes, as it
gives {\it a priori} estimates on $b_{j,n}$ at all $j$ and $n$. We can
also provide a partial proof for the energy preserving scheme.  We
proceed in the following steps.  First we prove results on the
solvability of the nonlinear algebraic systems.  Next we establish
stability and consistency. Finally, we prove convergence.

\subsection{Solvability}

We prove solvability via the implicit function theorem.

\begin{lem}
  \label{l:solvability}
  Given $\bb_n$ and one of the three schemes, there exists $\dt_1>0$,
  depending on $\|\bb_n\|_2$, such that for all $\dt \leq \dt_1$, a
  unique solution, $\bb_{n+1}$, exists.  Moreover, as a function of
  $\dt$, $\bb_{n+1}$ is $C^1$.
\end{lem}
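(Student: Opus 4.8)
The plan is to recast each of the three one-step maps as the zero set of a smooth function of $(\bb_{n+1},\dt)$ and apply the implicit function theorem at $\dt=0$. Fix $\bb_n\in\C^N$ and let $G:\C^N\times\R\to\C^N$ be defined by
\[
  G(\bv;\dt) = \bv - \bb_n - \dt\,\bF(\bb_n,\bv;\dt),
\]
where $\bF$ is whichever of $\bF^{\rm Midpt}$, $\bF^{\rm Mass}$, $\bF^{\rm Eng}$ is under consideration. Each component of $\bF$ is a polynomial in the entries of $\bb_n$, $\bv$, and their conjugates, hence $G$ is real-analytic (in particular $C^1$) in $(\bv,\dt)$, viewing $\C^N\cong\R^{2N}$. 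At $\dt=0$ we have $G(\bb_n;0)=0$, and the partial (real) derivative $D_\bv G(\bb_n;0) = I$, the identity on $\R^{2N}$, since the $\dt$-factor kills the contribution of $D_\bv\bF$. The identity is invertible, so the implicit function theorem furnishes $\dt_1>0$ and a $C^1$ map $\dt\mapsto\bb_{n+1}(\dt)$, defined for $|\dt|\le\dt_1$, with $G(\bb_{n+1}(\dt);\dt)=0$ and $\bb_{n+1}(0)=\bb_n$; moreover this is the unique solution in a neighborhood of $\bb_n$.

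To promote local uniqueness to the genuine uniqueness claimed, and to get the dependence of $\dt_1$ on $\|\bb_n\|_2$ explicit, I would run a quantitative fixed-point (contraction mapping) argument in parallel: define $\Phi(\bv) = \bb_n + \dt\,\bF(\bb_n,\bv;\dt)$ on the closed ball $\overline{B}(\bb_n,\|\bb_n\|_2)$ (or any fixed multiple of $\|\bb_n\|_2$). Since each $F_j$ is a sum of cubic monomials in the relevant variables, on this ball one has bounds of the form $\|\bF(\bb_n,\bv;\dt)\|_2\le C\|\bb_n\|_2^3$ and a Lipschitz estimate $\|\bF(\bb_n,\bv;\dt)-\bF(\bb_n,\bv';\dt)\|_2\le C\|\bb_n\|_2^2\|\bv-\bv'\|_2$, with $C$ an absolute constant (depending only on the scheme, via the coefficients and the fixed number of neighbor terms). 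Choosing $\dt_1$ so that $\dt_1 C\|\bb_n\|_2^2\le\tfrac12$ makes $\Phi$ a contraction that maps the ball into itself, giving a unique fixed point there; this pins down $\dt_1 = c/(1+\|\bb_n\|_2^2)$ for an explicit $c$. The $C^1$ dependence on $\dt$ then follows either from the IFT step above or by differentiating the fixed-point relation.

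The main obstacle is bookkeeping rather than conceptual: one must check the boundary terms $b_0=b_{N+1}=0$ (or the periodic identifications) do not spoil the polynomial structure — they do not, they merely delete some monomials — and one must be slightly careful that $\bF$ depends on $\dt$ at all only through the schemes where it does not; here $\bF$ is in fact $\dt$-independent for all three schemes, which only simplifies matters. A minor point worth stating cleanly is that differentiating in the sense of real calculus on $\R^{2N}$ is the right framework, since the maps involve complex conjugation and are not holomorphic; the IFT applies verbatim to the $C^1$ real map $G$, and analyticity (should one want it) follows from the analytic IFT since $G$ is real-analytic. I would close by noting that the same argument applies unchanged to the trapezoidal method \eqref{e:trap}, so "one of the three schemes" can be read to include it.
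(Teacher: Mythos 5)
Your proposal is correct and follows essentially the same route as the paper: recast the scheme as the zero set of $G(\bv;\dt)=\bv-\bb_n-\dt\,\bF(\bb_n,\bv;\dt)$ viewed as a real map on $\R^{2N}$, observe that the Jacobian in $\bv$ at $\dt=0$ is the identity, and invoke the implicit function theorem. Your supplementary contraction-mapping argument goes a bit further than the paper, which only asserts that $\dt_1$ is controlled by $\|\bb_n\|_2$ via the $\ell^2$ topology, whereas you extract the explicit scaling $\dt_1\sim c/(1+\|\bb_n\|_2^2)$.
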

\begin{proof}
  We frame this in terms of real and imaginary parts, with
  $\bb = \bu + i \bv$, and $\bu, \bv\in \R^{N}$.  Define the function
  $\mathcal{F}:\R^{2N}\times \R\to \R^{2N}$ as follows.
  \begin{equation}
    \mathcal{F}(\bu,\bv, \dt; \bb_n) = \begin{pmatrix} \bu \\
      \bv \end{pmatrix} - \begin{pmatrix} \Re\bb_n \\
      \Im\bb_n \end{pmatrix}  - \dt \begin{pmatrix}
      \Re\bF(\bb_n,\bu + i \bv;\dt)\\
      \Im\bF(\bb_n,\bu + i \bv;\dt)
    \end{pmatrix}
  \end{equation}
  Notice that $\Re\bF(\bb_n,\bu + i \bv;\dt)$ and
  $\Im\bF(\bb_n,\bu + i \bv;\dt)$ are cubic in the components of $\bu$
  and $\bv$.  Taking a gradient of the above expression in
  $(\bu, \bv)$,
  \begin{equation}
    \nabla_{\bu,\bv}\mathcal{F} = I - \dt \nabla_{\bu, \bv}\begin{pmatrix}
      \Re\bF(\bb_n,\bu + i \bv;\dt)\\
      \Im\bF(\bb_n,\bu + i \bv;\dt)
    \end{pmatrix}
  \end{equation}
  At $\dt = 0$,the solution is simply $\bu = \Re \bb_n$ and
  $\bv =\Im \bb_n$.  Furthermore,
  \begin{equation*}
    \left.\nabla_{\bu, \bv} \mathcal{F} \right|_{\substack{\bu =
        \bu_n\\\bv=\bv_n\\\dt=0}} = I.
  \end{equation*}
  Therefore the implicit function theorem applies and there exists a
  positive interval of $\dt$ values for which we can compute
  $\dt \mapsto (\bu, \bv)\mapsto \bb = \bu + i \bv$.

  That $\dt_1$ is controlled by $\|\bb_{n}\|_2$ follows from our use
  of the $\ell^2$ topology in our application of the implicit function
  theorem, \cite{rudin1976principles}.

\end{proof}

\begin{cor}
  \label{c:mass}
  Given an initial condition, $\bb_0$, there exists a $\dt_1$ such
  that for any fixed $\dt\leq \dt_1$, the mass preserving schemes can
  always be solved.
\end{cor}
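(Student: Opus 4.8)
The plan is to combine the one-step solvability result of Lemma~\ref{l:solvability} with the exact mass conservation established in Propositions~\ref{p:midpoint} and~\ref{p:mass}, and then to run an induction on the step index $n$. The key observation is that Lemma~\ref{l:solvability} furnishes a threshold $\dt_1$ that depends on the data of the current step \emph{only} through $\|\bb_n\|_2$, while the mass-preserving schemes leave $\calM[\bb_n] = \|\bb_n\|_2^2$ exactly unchanged; hence a single threshold, computed from the initial norm, serves at every step.

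Concretely, I would set $\dt_1 = \dt_1(\|\bb_0\|_2)$ to be the threshold provided by Lemma~\ref{l:solvability} for the scheme under consideration, with $\bb_n$ taken to be $\bb_0$, and fix any $\dt \le \dt_1$. For the base case, Lemma~\ref{l:solvability} gives a unique $\bb_1$ solving the scheme; its hypothesis having been verified, Proposition~\ref{p:midpoint} (for implicit midpoint) or Proposition~\ref{p:mass} (for the modified midpoint scheme~\eqref{e:mass_scheme}) then yields $\|\bb_1\|_2^2 = \calM[\bb_1] = \calM[\bb_0] = \|\bb_0\|_2^2$. For the inductive step, suppose $\bb_0,\dots,\bb_n$ have been constructed with $\|\bb_k\|_2 = \|\bb_0\|_2$ for all $k \le n$. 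Since the threshold in Lemma~\ref{l:solvability} depends on the current iterate only through its $\ell^2$ norm, and $\|\bb_n\|_2 = \|\bb_0\|_2$, the choice $\dt \le \dt_1$ still meets the hypothesis of that lemma at step $n$, so a unique $\bb_{n+1}$ exists; applying the relevant conservation proposition again gives $\|\bb_{n+1}\|_2 = \|\bb_n\|_2 = \|\bb_0\|_2$, closing the induction. Thus the scheme can be advanced for all $n$ with the single fixed time step $\dt$.

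The only point requiring care — and the closest thing to an obstacle — is the logical ordering within each step: the conservation propositions are stated under the standing assumption that the nonlinear system has already been solved, so one must invoke Lemma~\ref{l:solvability} to obtain $\bb_{n+1}$ \emph{first} and only then apply conservation to propagate the norm identity forward. This is automatic here because the induction hypothesis pins down $\|\bb_n\|_2$ before we attempt to solve for $\bb_{n+1}$. One should also note that no uniformity in $\dt_1$ beyond its dependence on $\|\bb_0\|_2$ is needed, precisely because the norm never grows; this dependence is exactly the content of the final sentence of Lemma~\ref{l:solvability}.
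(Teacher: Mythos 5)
Your proposal is correct and follows exactly the paper's argument: invoke Lemma~\ref{l:solvability} to get a threshold depending only on $\|\bb_n\|_2$, use exact conservation of $\calM$ to see that this norm (and hence the threshold) is unchanged from step to step, and close by induction. The paper's proof is just a terser version of the same reasoning.
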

\begin{proof}
  By Lemma \ref{l:solvability}, we can find a $\dt_1$ to compute
  $\bb_1$ for an admissible $\dt$.  Since the mass schemes conserve
  $\ell_2$, $\bb_1$ has the same $\ell_2$ norm as $\bb_0$.  Since
  $\dt_1$ only depends on the magnitude of this norm, it remains an
  applicable value for computing $\bb_2$, which can be computed at the
  same $\dt$.  By induction this can be carried on to any iterate.
\end{proof}

\begin{rem}
  Since the energy invariant \eqref{e:eng} does not provide {\it a
    priori} bounds on a norm, we are unable to show global persistence
  of the energy preserving scheme.
\end{rem}

\subsection{Stability}
Obviously, our schemes have the desired stability property owing to
the smoothness of the $\bF$ functions in all cases:
\begin{lem}
  \label{l:stability}
  For each integration scheme, there exists a polynomial $\Pi$, with
  positive coefficients, such that for all
  $\bu, \bv, \tilde \bu,\tilde\bv \in \C^N$:
  \[
    \|\bF(\bu, \bv) - \bF(\tilde\bu, \tilde\bv)\| \leq \Pi(\|\bu\|_2,
    \|\bv\|_2, \|\tilde\bu\|_2, \|\tilde\bv\|_2) (\|\bu - \tilde\bu\|
    + \|\bv - \tilde\bv\|)
  \]
\end{lem}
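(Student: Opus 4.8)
The plan is to exploit the fact that in each of the four schemes---\eqref{e:trap}, \eqref{e:midpoint}, \eqref{e:mass_scheme}, and \eqref{e:eng_scheme}---every component $F_j$ of $\bF$ is a homogeneous cubic polynomial in the entries of its two vector arguments together with their complex conjugates. First I would record this structural observation precisely: the nonlinearity in \eqref{e:toy} is built from cubic monomials of the types $\abs{b_k}^2 b_k = b_k\bar b_k b_k$ and $\bar b_k\,b_\ell^2$, while the averaging operators appearing in \eqref{e:notation}---$b_{j,n+1/2}$, $b_{j,n+1/2}^2$, $(b^2)_{j,n+1/2}$, and $\abs{b}^2_{j,n+1/2}$---are linear or quadratic in the entries of $(\bb_n,\bb_{n+1})$. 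Composing them therefore yields, for each $j$, a finite sum of degree-three monomials in the finite set $\set{b_{k,n},\,b_{k,n+1},\,\bar b_{k,n},\,\bar b_{k,n+1}}$, with $k$ drawn from a fixed bounded stencil around $j$; the boundary conventions $b_0=b_{N+1}=0$ (or the periodic relabelling \eqref{e:periodic}) only delete or rename terms and cause no difficulty.

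Next I would reduce everything to the elementary telescoping identity for trilinear products. Writing a generic monomial as $x_1x_2x_3$ with each $x_i$ an entry of one of the two argument vectors (in this lemma denoted $\bu,\bv$, not to be confused with the real/imaginary part usage elsewhere) or its conjugate, and $\tilde x_1\tilde x_2\tilde x_3$ the corresponding monomial in $(\tilde\bu,\tilde\bv)$, we have
\[
  x_1x_2x_3 - \tilde x_1\tilde x_2\tilde x_3 = (x_1-\tilde x_1)\,x_2x_3 + \tilde x_1\,(x_2-\tilde x_2)\,x_3 + \tilde x_1\tilde x_2\,(x_3-\tilde x_3).
\]
Each summand is a product of two ``bounding'' factors, each an entry of one of $\bu,\bv,\tilde\bu,\tilde\bv$ and hence of modulus at most the corresponding $\ell^2$ norm (using $\abs{b_k}\le\norm{\bb}_2$ and $\abs{\bar b_k}=\abs{b_k}$), times one ``difference'' factor of the form $\abs{b_{k,\cdot}-\tilde b_{k,\cdot}}\le \norm{\bu-\tilde\bu}+\norm{\bv-\tilde\bv}$. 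Taking $\ell^2$ norms of $\bF(\bu,\bv)-\bF(\tilde\bu,\tilde\bv)$, applying the triangle inequality over the finitely many monomials in each $F_j$, and summing over $j$ with $\sum_j\abs{b_{k(j),\cdot}}^2\le C\norm{\bb}_2^2$ for a constant $C$ depending only on the fixed coupling stencil, collects the bound into the claimed form, with $\Pi$ a polynomial of degree two in its four arguments having nonnegative coefficients (which may be made strictly positive by adding a harmless positive constant).

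The only genuine bookkeeping point---and the closest thing to an obstacle---is that $\bF$ depends on \emph{both} vector slots simultaneously, so the telescoping must range over entries of $\bu$ and $\bv$ at once; this is handled automatically by treating the three factors of each monomial uniformly regardless of which slot they come from, or, if one prefers, by first splitting $\bF(\bu,\bv)-\bF(\tilde\bu,\tilde\bv) = \bracket{\bF(\bu,\bv)-\bF(\tilde\bu,\bv)} + \bracket{\bF(\tilde\bu,\bv)-\bF(\tilde\bu,\tilde\bv)}$ and applying the trilinear estimate to each bracket. Everything else is routine: the $\bF$'s carry no genuine $\dt$ dependence, the dimension $N$ is fixed so all norms are comparable, and no cancellation is required. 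I would also note in passing that the same computation yields, as the special case $\bu=\tilde\bu=\bb$ up to relabelling, a growth bound $\norm{\bF(\bb,\bb)}\le \Pi(\norm{\bb}_2,\dots)\,\norm{\bb}_2$ of the type needed in the solvability and consistency steps.
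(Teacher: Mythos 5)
Your proposal is correct and follows the same idea as the paper, which simply observes that the components of $\bF$ are cubic in their arguments and that the $\ell^2$ norm gives pointwise control, declaring the result immediate. You have merely spelled out, via the standard trilinear telescoping identity, the details the paper leaves implicit.
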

\begin{proof}
  Since the components of $\bF$ are cubic in their arguments and the
  $\ell^2$ norm gives pointwise control, the result is immediate.
  Each scheme will have a different polynomial.

\end{proof}

\subsection{Consistency}

Before obtaining the consistency result, we state a lemma about the
solution to \eqref{e:toy}.
\begin{thm}
  \label{t:toy}
  Let $\bb(t)$ be the solution to \eqref{e:toy} for initial condition
  $\bb(0)$.  Then $\bb(t)$ is a $C^\infty$ and a global in time such
  that for any $k$, there is a polynomial, $\Pi_k$, with positive
  coefficients such that
  \begin{equation}
    \norm{\frac{d^k \bb}{dt^k }}_2 \leq \Pi_{k}(\norm{\bb(0)}_{2})
  \end{equation}
\end{thm}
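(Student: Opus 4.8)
The plan is to establish global existence, smoothness, and the polynomial growth bounds on derivatives essentially by exploiting the conservation of mass \eqref{e:mass}, which gives an \emph{a priori} bound on $\norm{\bb(t)}_2$ for all time.

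First, I would note that the right-hand side of \eqref{e:toy} is a polynomial (cubic) vector field in the components of $\bb$ and $\bar\bb$, hence real-analytic, so the Picard--Lindel\"of theorem yields a unique local $C^\infty$ (indeed analytic) solution on a maximal interval of existence. Next, I would invoke conservation of \eqref{e:mass}: since $\calM[\bb(t)] = \norm{\bb(t)}_2^2$ is constant along the flow, we have $\norm{\bb(t)}_2 = \norm{\bb(0)}_2$ for all $t$ in the maximal interval. A bounded solution to an ODE with a (locally Lipschitz) polynomial right-hand side cannot blow up in finite time, so the maximal interval is all of $\R$; this gives the global-in-time claim with the bound $\norm{\bb(t)}_2 \le \Pi_0(\norm{\bb(0)}_2)$ (in fact equality, with $\Pi_0(x) = x$).

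For the derivative bounds I would argue inductively on $k$. The base case $k=0$ is conservation of mass. For $k=1$, from \eqref{e:toy} each component of $\dot\bb$ is a cubic monomial expression in the $b_j, \bar b_j$ with neighbors, so by the triangle inequality and the fact that the $\ell^2$ norm dominates the $\ell^\infty$ norm pointwise, $\norm{\dot\bb}_2 \le C\norm{\bb}_2^3 = C\norm{\bb(0)}_2^3 =: \Pi_1(\norm{\bb(0)}_2)$. For the inductive step, I would differentiate \eqref{e:toy} $k$ times in $t$; by the Leibniz rule, $d^{k}\bb/dt^{k}$ is expressed as a finite sum of terms, each a product of three factors of the form $d^{m}\bb/dt^{m}$ or $d^{m}\bar\bb/dt^{m}$ with $m \le k-1$ (plus, after one more differentiation to close the recursion, $m \le k$ appearing linearly, which can be moved to the left or bounded directly). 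Applying the inductive hypothesis to each factor and again using that $\ell^2$ controls pointwise values, one bounds $\norm{d^k\bb/dt^k}_2$ by a polynomial in $\norm{\bb(0)}_2$ with positive coefficients, namely a suitable combination of products $\Pi_{m_1}\Pi_{m_2}\Pi_{m_3}$; one then \emph{defines} $\Pi_k$ to be exactly that polynomial.

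The only mild subtlety — and the step I would be most careful about — is the bookkeeping in the inductive step: when differentiating $|b_j|^2 b_j = b_j^2 \bar b_j$ and the neighbor terms $b_{j\pm1}^2\bar b_j$ a total of $k$ times, one must verify that every resulting summand really does involve only derivatives of order $\le k-1$ in each of its (at most) three factors, so that the inductive hypothesis applies cleanly; this is immediate from the Leibniz rule since the original product has exactly three factors, but it should be stated explicitly. There is no analytic obstacle here: the bound $\norm{\bb(0)}_2$ is uniform in $t$ by conservation of mass, so no Gr\"onwall-type growth in $t$ enters and the bounds are genuinely $t$-independent. I would close by remarking that the same argument applies verbatim under the periodic boundary conditions \eqref{e:periodic}, since those also preserve \eqref{e:mass}.
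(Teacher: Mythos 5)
Your proposal is correct and follows essentially the same route as the paper's (much terser) proof: local existence from the polynomial right-hand side, globality from conservation of $\ell^2$, smoothness by bootstrapping, and the polynomial derivative bounds by induction via the Leibniz rule; you simply spell out the induction that the paper leaves implicit. The only blemish is a harmless off-by-one in the inductive step (to bound $d^k\bb/dt^k$ one differentiates \eqref{e:toy} $k-1$ times, not $k$), which does not affect the argument.
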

\begin{proof}
  Since \eqref{e:toy} has a righthand side which is polynomial in
  $b_j$ and $\bar b_j$, a $C^1$ local in time solution exists.  Since
  it conserves $\ell^2$, it will in fact be global.  By a bootstrap
  argument, it will also be $C^\infty$.  The polynomial bound in terms
  of the data can then be obtained by induction.
\end{proof}

\begin{lem}
  \label{l:consistency}
  For any of the conservative schemes, the local truncation error is
  \begin{equation*}
    \|\bb(t_{n+1}) - \bb(t_{n}) - \dt \bF(\bb(t_n), \bb(t_{n+1})) \| \leq
    C \dt^3,
  \end{equation*}
  and the constant $C$ only depends on $\|\bb(0)\|_2$.
\end{lem}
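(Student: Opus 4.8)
The plan is to expand everything in powers of $\dt$ about the midpoint $t_{n+1/2} = t_n + \dt/2$, since all three schemes are built from midpoint-type averages and are symmetric, so the natural expansion point is $t_{n+1/2}$ rather than $t_n$. First I would write the exact solution increment as
\[
  \bb(t_{n+1}) - \bb(t_n) = \dt\, \dot\bb(t_{n+1/2}) + \tfrac{\dt^3}{24}\,\dddot\bb(t_{n+1/2}) + \bigo(\dt^5),
\]
using Taylor's theorem with remainder; the even-order terms cancel by symmetry. By Theorem \ref{t:toy}, $\bb \in C^\infty$ and each $\|d^k\bb/dt^k\|_2$ is bounded by a polynomial in $\|\bb(0)\|_2$, so the $\bigo(\dt^3)$ remainder here has a constant depending only on $\|\bb(0)\|_2$ (over the fixed finite time interval of interest; strictly the remainder lives on $[t_n,t_{n+1}]$, so the bound is uniform in $n$).

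Next I would expand $\dt\,\bF(\bb(t_n),\bb(t_{n+1}))$ for each scheme. The key observation is that each $\bF$ is a fixed cubic polynomial in the linear and nonlinear averages defined in \eqref{e:notation}, and each of those averages, when evaluated at $(\bb(t_n),\bb(t_{n+1}))$, equals the corresponding pointwise quantity at $t_{n+1/2}$ up to $\bigo(\dt^2)$: e.g. $b_{j,n+1/2} = b_j(t_{n+1/2}) + \bigo(\dt^2)$, $(b^2)_{j,n+1/2} = b_j(t_{n+1/2})^2 + \bigo(\dt^2)$, $\abs{b}^2_{j,n+1/2} = \abs{b_j(t_{n+1/2})}^2 + \bigo(\dt^2)$, and likewise $b_{j,n+1/2}^2 = b_j(t_{n+1/2})^2 + \bigo(\dt^2)$. (Again the $\bigo(\dt^2)$ constants come from Theorem \ref{t:toy}.) Substituting these into the cubic expression for $F_j$ — and using that a cubic polynomial is locally Lipschitz with constant controlled by the $\ell^2$ norms of its arguments, cf. Lemma \ref{l:stability} — shows that for every one of the three schemes,
\[
  F_j(\bb(t_n),\bb(t_{n+1})) = \paren{-i\abs{b_j}^2 b_j + 2 i \bar b_j (b_{j-1}^2 + b_{j+1}^2)}\Big|_{t_{n+1/2}} + \bigo(\dt^2) = \dot b_j(t_{n+1/2}) + \bigo(\dt^2),
\]
because the right-hand side of \eqref{e:toy} is exactly $\dot b_j = i(-\abs{b_j}^2 b_j + 2\bar b_j(b_{j-1}^2+b_{j+1}^2))$. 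Hence $\dt\,\bF(\bb(t_n),\bb(t_{n+1})) = \dt\,\dot\bb(t_{n+1/2}) + \bigo(\dt^3)$. Subtracting this from the exact-increment expansion above, the $\dt\,\dot\bb(t_{n+1/2})$ terms cancel and what remains is $\bigo(\dt^3)$, with constant depending only on $\|\bb(0)\|_2$.

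The main obstacle — really the only place care is needed — is checking that all the midpoint-type averages agree with the pointwise value at $t_{n+1/2}$ to second order and that plugging these into the cubic $F_j$ propagates the error as $\bigo(\dt^2)$ uniformly; this is where one invokes the polynomial a priori bounds of Theorem \ref{t:toy} to keep the implicit constants under control (the arguments of the cubic stay in a fixed $\ell^2$ ball determined by $\|\bb(0)\|_2$). Everything else is a routine Taylor expansion, and the different treatment of the interaction terms across \eqref{e:trap}, \eqref{e:midpoint}, \eqref{e:mass_scheme}, \eqref{e:eng_scheme} is irrelevant at this order since all variants reduce to the same leading symbol $\dot\bb(t_{n+1/2})$. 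One remark I would include: although the nonlinear averages $(b^2)_{j,n+1/2}$ and $b_{j,n+1/2}^2$ differ from each other at $\bigo(\dt^2)$, both match $b_j(t_{n+1/2})^2$ at leading order, which is all that is needed for second-order consistency.
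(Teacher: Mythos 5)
Your proof is correct, and it takes a recognizably different route from the paper's. The paper substitutes the exact solution into the scheme and Taylor expands every term about $t_n$, so it must compute the $\bigo(\dt)$ coefficients of each piece explicitly and verify that they cancel against $\tfrac{i}{2}\ddot b_j\,\dt$ by differentiating the right-hand side of \eqref{e:toy}; it carries this out only for \eqref{e:mass_scheme} and asserts the other schemes are ``similar.'' You instead expand about the midpoint $t_{n+1/2}$, where the symmetry of the schemes does the work: the central average of any smooth quantity agrees with its value at $t_{n+1/2}$ to $\bigo(\dt^2)$ with no first-order term, so all four variants collapse to $\dot\bb(t_{n+1/2})+\bigo(\dt^2)$ at once and no first-order cancellation needs to be checked. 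This buys a uniform treatment of all the schemes (including the differing interaction terms, which you correctly note are indistinguishable at this order) at the cost of having to verify the $\bigo(\dt^2)$ agreement of each of the averages in \eqref{e:notation} with its pointwise midpoint value; both arguments lean on Theorem \ref{t:toy} in the same way to make the constants depend only on $\|\bb(0)\|_2$, and your appeal to Lemma \ref{l:stability} to propagate the $\ell^2$-perturbation through the cubic nonlinearity is the right way to control the aggregation over $j$. One minor remark: your parenthetical hedge about ``the fixed finite time interval of interest'' is unnecessary, since Theorem \ref{t:toy} gives bounds on $\|d^k\bb/dt^k\|_2$ that are global in time, so your constant is uniform in $n$ exactly as the lemma requires.
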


\begin{proof}
  We prove this in the case of the mass conserving scheme
  \eqref{e:mass_scheme}.  The proofs for implicit midpoint and the
  energy preserving scheme are similar.

  Substituting $\bb(t)$ into \eqref{e:mass_scheme},
  \begin{equation}
    \label{e:consistency1}
    \begin{split}
      \tfrac{i}{\Delta t }(b_{j}(t_{n+1}) - b_{j}(t_n)) -
      &\tfrac{1}{4}(|b_j(t_n)|^2 + |b_j(t_{n+1})|^2) (b_j(t_n) + b_j(t_{n+1})) \\
      \quad + \tfrac{1}{4} (\bar b_j(t_n) + \bar
      b_j(t_{n+1}))&\left[(b_{j+1}(t_n) + b_{j+1}(t_{n+1}))^2 \right.\\
      &\quad+ \left.(b_{j-1}(t_n) + b_{j-1}(t_{n+1}))^2 \right],
    \end{split}
  \end{equation}
  and Taylor expanding about $\dt = 0$,
  \begin{align}
    &\tfrac{i}{\Delta t }(b_{j}(t_{n+1}) - b_{j}(t_n))   = i \dot b_j +
      \tfrac{i}{2}\ddot b_j \dt +\bigo(\dt^2)\\
    \begin{split}
      &-\tfrac{1}{4}(|b_j(t_n)|^2 + |b_j(t_{n+1})|^2) (b_j(t_n) +
      b_j(t_{n+1})) \\
      &= - |b_j|^2 b_j + \dt \bracket{-\tfrac{1}{2}(\bar b_j \dot b_j
        + b_j\dot{\bar{b}}_j)b_j- \tfrac{1}{2}|b_j|^2 \dot b_j} +
      \bigo(\dt^2)
    \end{split}\\
    \begin{split}
      &\tfrac{1}{4} (\bar b_j(t_n) + \bar
      b_j(t_{n+1}))\bracket{(b_{j+1}(t_n) + b_{j+1}(t_{n+1}))^2 +
        (b_{j-1}(t_n) + b_{j-1}(t_{n+1}))^2  }\\
      &=2 \bar b_j(b_{j+1}^2 + b_{j-1}^2) + \dt
      \bracket{\dot{\bar{b}}_j(b_{j+1}^2 + b_{j-1})^2 + 2 \bar
        b_j(b_{j+1}\dot{b}_{j+1} + b_{j-1} \dot{b}_{j-1})} \\
      &\quad + \bigo(\dt^2)
    \end{split}
  \end{align}
  In the above three expressions, we have suppressed the $t_n$
  dependence in the terms on the righthand side.  The $\bigo(\dt^2)$
  expressions reflect the remainder terms from the Taylor expansion,
  which are {\it a priori} bounded by Theorem \ref{t:toy}.
  Substituting back into \eqref{e:consistency1}, and writing the
  equation in terms of $\bb(t_{n+1}) - \bb(t_n)$ yields the result.

\end{proof}


\subsection{Convergence}

The above consistency and stability results allow us to prove, for the
mass preserving schemes:

\begin{thm}
  \label{t:mass_conv}

  Given an initial condition $\bb(0)$, for either of the mass
  conserving schemes, there exists a $\dt_\star$, $K>0$ and $C>0$,
  such that for all $\dt \leq \dt_\star$ and all $n$, the error is
  \begin{equation}
    \|\bb_{n} - \bb(t_n)\|_2 \leq C (e^{K t_n}-1)\dt^2
  \end{equation}

\end{thm}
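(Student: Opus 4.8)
The plan is to run the classical ``consistency plus stability implies convergence'' argument for one-step methods; the one nonroutine point is that the \emph{a priori} $\ell^2$ bounds supplied by mass conservation are exactly what make the relevant Lipschitz constant uniform in $n$.

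First I would fix the datum and set $M = \norm{\bb(0)}_2$. By Corollary \ref{c:mass} there is a $\dt_1 > 0$ so that, for every $\dt \le \dt_1$, either of the mass schemes is solvable at every step, and by Propositions \ref{p:midpoint} and \ref{p:mass} every iterate satisfies $\norm{\bb_n}_2 = M$. By Theorem \ref{t:toy} the exact solution $\bb(t)$ is global and $C^\infty$, and conservation of \eqref{e:mass} gives $\norm{\bb(t)}_2 = M$ for all $t$. Hence, evaluating the polynomial $\Pi$ of Lemma \ref{l:stability} on arguments all of size at most $M$, there is a constant $L = L(M)$ with
\[
  \norm{\bF(\bb_n,\bb_{n+1}) - \bF(\bb(t_n),\bb(t_{n+1}))}_2
  \le L\paren{\norm{\bb_n - \bb(t_n)}_2 + \norm{\bb_{n+1} - \bb(t_{n+1})}_2}.
\]

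Next I would introduce the error $\be_n = \bb_n - \bb(t_n)$ and subtract the scheme from the exact evolution. Lemma \ref{l:consistency} says the exact solution satisfies the scheme up to a defect of size at most $C_0 \dt^3$ with $C_0 = C_0(M)$, so
\[
  \be_{n+1} = \be_n + \dt\bracket{\bF(\bb_n,\bb_{n+1}) - \bF(\bb(t_n),\bb(t_{n+1}))} + \bigo(\dt^3),
\]
and the Lipschitz bound above gives
\[
  \norm{\be_{n+1}}_2 \le \norm{\be_n}_2 + \dt L\paren{\norm{\be_n}_2 + \norm{\be_{n+1}}_2} + C_0\dt^3.
\]
Taking $\dt_\star = \min\set{\dt_1, \tfrac{1}{2L}}$, for $\dt \le \dt_\star$ the term $\dt L\,\norm{\be_{n+1}}_2$ absorbs into the left-hand side, and since $\tfrac{1+x}{1-x} \le 1 + 4x$ on $0 \le x \le \tfrac12$ one obtains $\norm{\be_{n+1}}_2 \le (1 + K\dt)\norm{\be_n}_2 + 2C_0\dt^3$ with $K = 4L$.

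Finally, because $\be_0 = 0$, a discrete Gr\"onwall iteration gives
\[
  \norm{\be_n}_2 \le 2C_0\dt^3 \sum_{k=0}^{n-1}(1+K\dt)^k = \frac{2C_0}{K}\dt^2\paren{(1+K\dt)^n - 1} \le \frac{2C_0}{K}\paren{e^{Kt_n}-1}\dt^2,
\]
which is the assertion with $C = 2C_0/K$. I expect the only real difficulty to be the bookkeeping that keeps $L$, $C_0$, and hence $K$ genuinely independent of $n$; this hinges entirely on mass conservation of both the discrete scheme and the continuous flow, and it is exactly why the same argument does not close for the energy-preserving scheme \eqref{e:eng_scheme}, where \eqref{e:eng} furnishes no \emph{a priori} control of $\norm{\bb_n}_2$.
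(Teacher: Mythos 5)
Your proposal is correct and is exactly the standard consistency--stability--Gr\"onwall argument that the paper invokes (the paper omits the details, noting only that conservation of $\ell^2$ makes the stability and consistency constants uniform in $n$, which is precisely the point you make explicit). Your bookkeeping checks out, including the absorption of the implicit $\dt L\norm{\be_{n+1}}_2$ term and the elementary bound $\tfrac{1+x}{1-x}\le 1+4x$ on $[0,\tfrac12]$.
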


\begin{proof}
  As the proof is standard, we omit the details.  We note that our
  above result holds for any $n$ by virtue of the conservation of
  $\ell^2$, which gives uniform in $n$ estimates of all constants from
  the stability and consistency estimates.

\end{proof}

\begin{rem}
  If an {\it a priori} bound were available for the energy preserving
  scheme, the analog of Theorem \ref{t:mass_conv} would hold for it.
\end{rem}

\section{Numerical Results}
\label{s:numerics}

In this section, we explore our methods, and compare them to others.
Throughout, we make use of the Portable, Extensible Tookit for
Scientific Computation (PETSc),
\cite{petsc-efficient,petsc-user-ref,petsc-web-page}, which contains
Crank-Nicolson (Trapezoidal Rule), Implicit Midpoint, and
Runge-Kutta 4 (RK4) as subroutines.  As both linear and nonlinear
solvers are required, for these methods, we compute with:
\begin{itemize}

\item An absolute tolerance of $10^{-50}$, such that the nonlinear
  solver terminates at step $k$ if the norm of the residual,
  $\|\mathbf{r}^{(k)}\|$, falls beneath this value.

\item A relative tolerance of $10^{-15}$, such that the nonlinear
  solver terminates at step $k$ if the norm of the residual relative
  to the initial residual,
  $\|\mathbf{r}^{(k)}\|/\|\mathbf{r}^{(0)}\|$, falls beneath this
  value.

\item A step tolerance of $10^{-15}$ such that the nonlinear solver
  terminates at step $k$ if the norm of the step relative to the norm
  of the approximate solution,
  $\|\Delta\mathbf{b}^{(k)}\|/\|\mathbf{b}^{(k)}\|$, falls beneath
  this value.
\end{itemize}
These settings mitigate the error due to the nonlinear solvers
allowing for a direct examination of the impact of $\dt$.  The
nonlinear solvers typically terminate due to either the relative norm
or the step size becoming small.

We also compare against a Projection method,
\begin{equation}
  \bb_{n+1} = P(\underbrace{\bb_n + \dt \bF^{\rm
      RK4}(\bb_n;\dt)}_{\equiv \bb_{n+1}^{\rm RK4}}; \calM_0, \calH_0).
\end{equation}
Here, a candidate solution for time step $n+1$ is produced with RK4,
and the projector finds an element of
$\set{\bb\in \C^N\mid \calM[\bb]=\calM_0, \quad \calH[\bb]=\calH_0}$,
closest to ${\bb_{n+1}^{\rm RK4}}$ with respect to the 2-norm.
Following, \cite{Hairer:2006vg}, this is approximated by finding
Lagrange multipliers $\lambda_\calM$ and $\lambda_\calH$ as roots of
the function
\begin{equation}
  \begin{split}
    &g(\lambda_\calM,\lambda_\calH; \bb_{n+1}^{\rm RK4}, \calM_0,
    \calH_0 )
    \\
    &= \begin{pmatrix} \calM[\bb_{n+1}^{\rm RK4} +
      \lambda_\calM\nabla_\bb\calM[\bb_{n+1}^{\rm RK4} ] +
      \lambda_\calH\nabla_\bb\calH[\bb_{n+1}^{\rm
        RK4} ]]-\calM_0\\
      \calH[\bb_{n+1}^{\rm RK4} +
      \lambda_\calM\nabla_\bb\calM[\bb_{n+1}^{\rm RK4} ] +
      \lambda_\calH\nabla_\bb\calH[\bb_{n+1}^{\rm RK4} ]]-\calH_0
    \end{pmatrix}.
  \end{split}
\end{equation}
The projected solution,
\begin{equation}
  \bb_{n+1} = \bb_{n+1}^{\rm RK4} + \lambda_\calM\nabla_\bb\calM[\bb_{n+1}^{\rm
    RK4} ] + \lambda_\calH\nabla_\bb\calH[\bb_{n+1}^{\rm
    RK4} ],
\end{equation}
should then conserve both invariants.  The root finding problem is now
in $\R^2$ instead of $\C^{N}$.  Here, we take an absolute tolerance of
$10^{-12}$, relative tolerance of $10^{-15}$, and a step tolerance of
$10^{-15}$.  The Projection method solver typically terminates because
it satisfies the absolute tolerance criterion.

\subsection{Pointwise Convergence}
\label{s:pointwise}
For an assessment of pointwise convergence, we use, as an initial
condition,
\begin{equation}
  \label{e:ic_shock}
  b_j = e^{i (j-1) \pi/4},\quad j = 1, \ldots, N.
\end{equation}
The evolution of this data, which was previously studied in
\cite{Colliander:2013hz} due to its interesting dynamics, is shown
here in Figure \ref{f:shock1} for $N=100$.  The notable feature of
this solution is the rarefactive wave behavior in the left half of the
domain and dispersive shock wave behavior in the right half.  This was
more extensively studied in \cite{herr2013discrete}.

Integrating the system out to $t_{\max} = 5$ for different values of
$\dt$, convergence results appear in Figure \ref{f:convergence}.
Here, the error is measured as
\begin{equation}
  \label{e:error1}
  \max\limits_{n}\|
  \bb^{\rm Scheme}_n- \bb^{\rm RK4}(t_n)\|_{2},
\end{equation}
where $\bb^{\rm Scheme}$ is one of the schemes, and $\bb^{\rm RK4}$ is
the RK4 solution computed with $\Delta t = 10^{-4}$.  This RK4
solution serves as a surrogate for the true solution.  As expected, we see $\bigo(\dt^2)$ convergence for the
conservative schemes.

A more thorough comparison of the integrators is given in
Tables \ref{t:err1}, \ref{t:mass1}, and \ref{t:eng1}, where, in each
case, the problem is integrated out to $t_{\max}=1$. The error,
\eqref{e:error1}, is comparable amongst \eqref{e:mass_scheme},
\eqref{e:eng_scheme}, Implicit Midpoint, and Trapezoidal Rule.  The
error is roughly an order of magnitude smaller for the Projection
method.  The invariants are preserved in the expected cases and
otherwise show $\bigo(\dt^2)$ convergence.  We note that the relative error in
the invariants is worst for Trapezoidal Rule, which conserves neither
invariant.  

\begin{figure}

  \subfigure{\includegraphics[width=6.25cm]{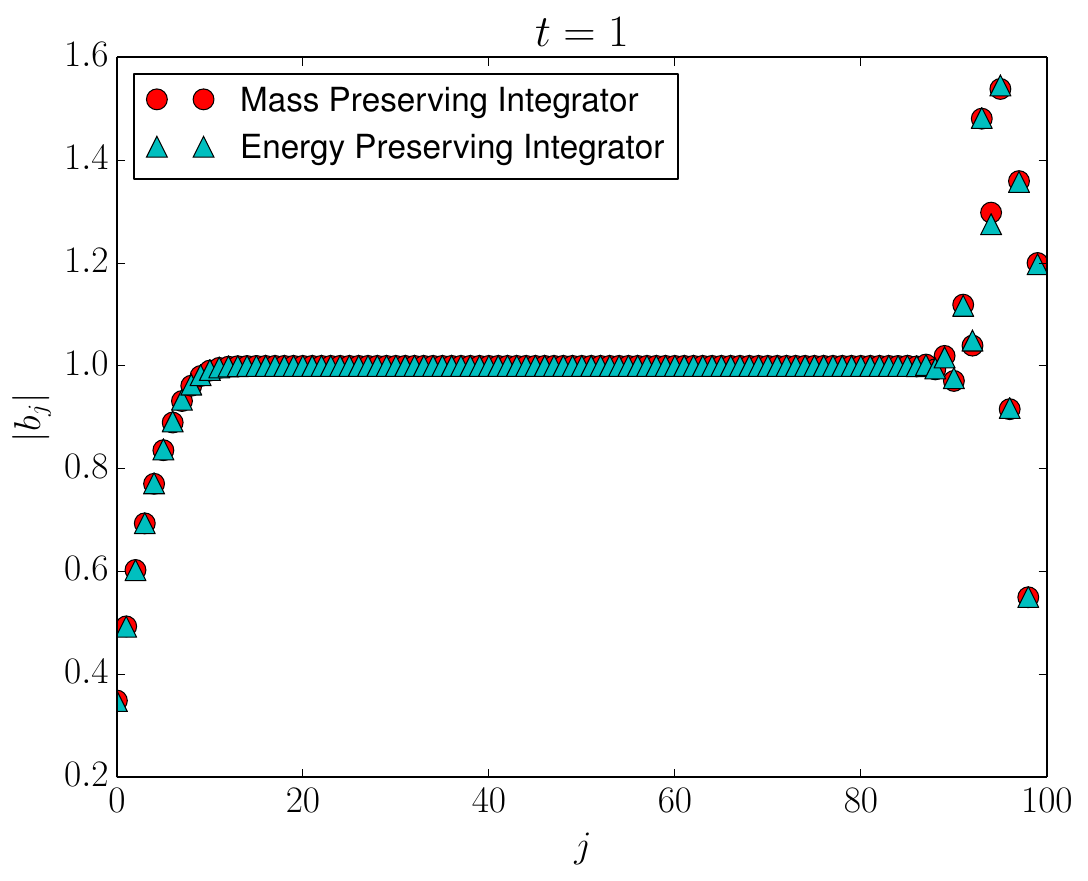}}
  \subfigure{\includegraphics[width=6.25cm]{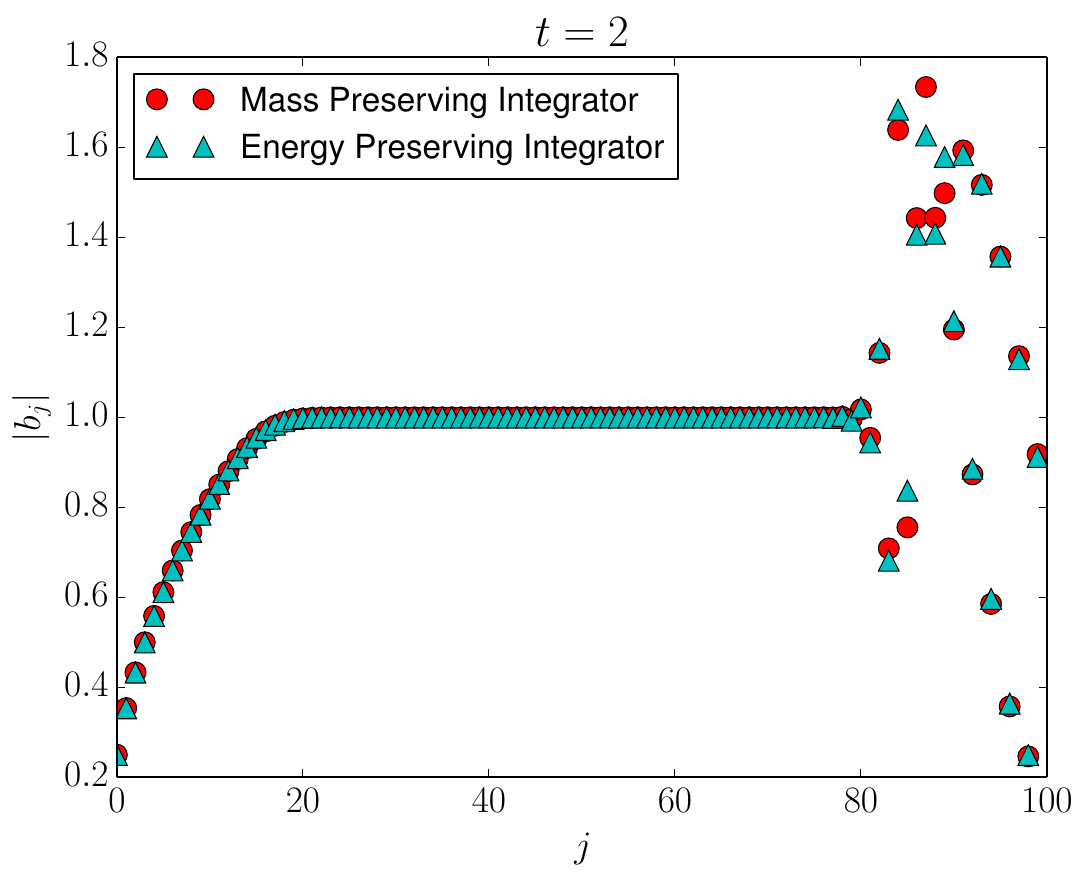}}

  \subfigure{\includegraphics[width=6.25cm]{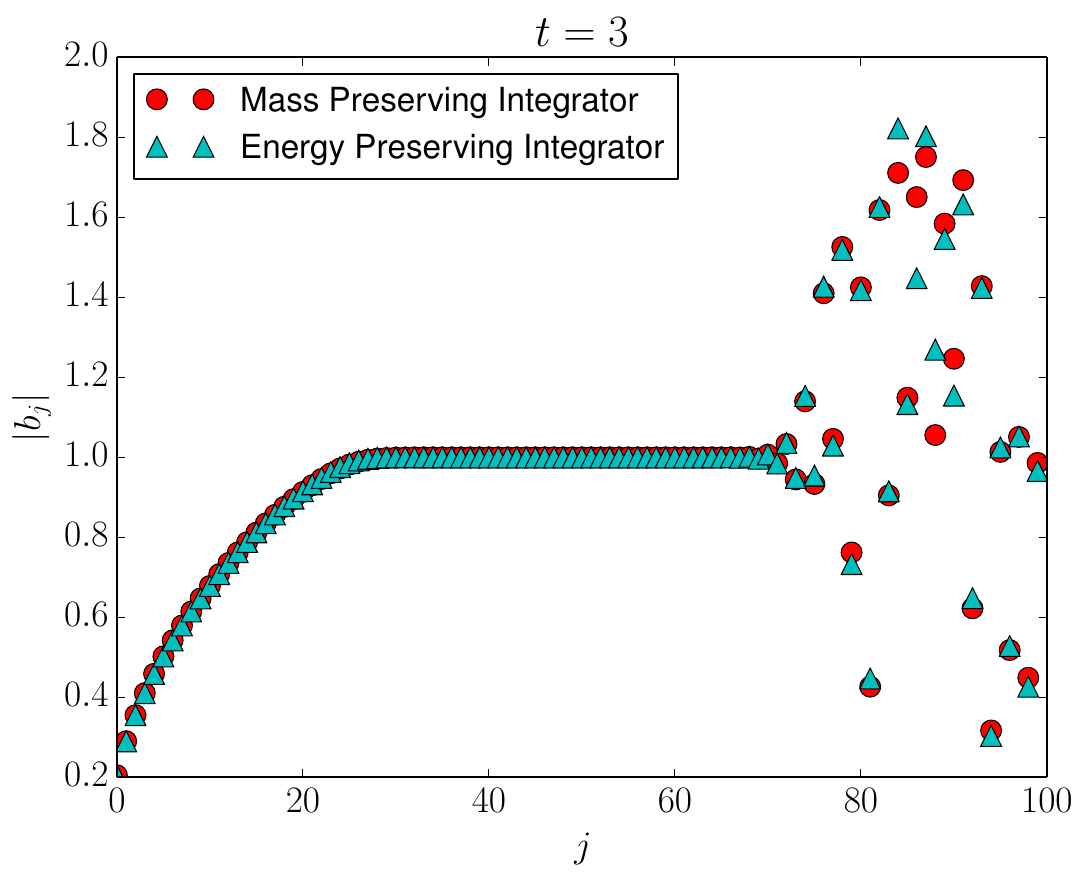}}
  \subfigure{\includegraphics[width=6.25cm]{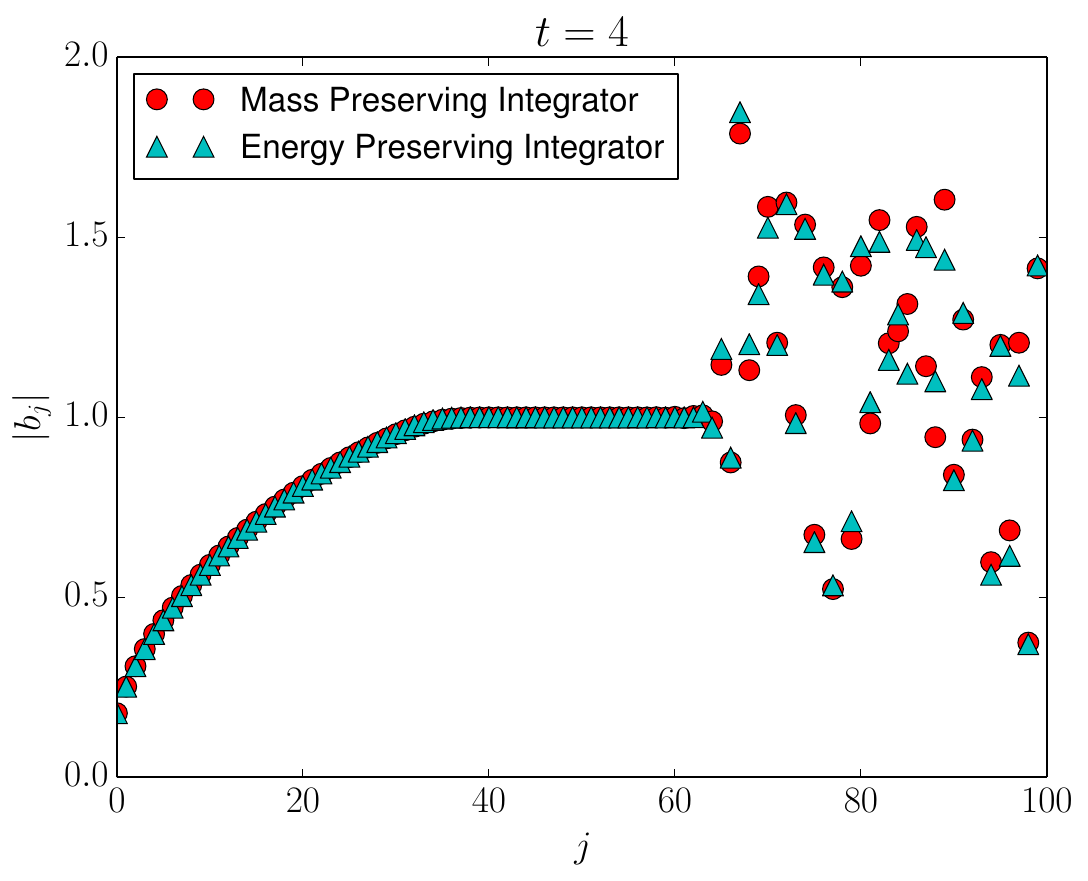}}
  \caption{The integrators reproduce the rarefactive and dispersive
    wave-like behavior for initial condition \eqref{e:ic_shock}
    observed in \cite{Colliander:2013hz, herr2013discrete} and show
    agreement with one another.  Computed with $\dt = 0.1$.}
  \label{f:shock1}
\end{figure}

\begin{figure}

  \includegraphics[width=8cm]{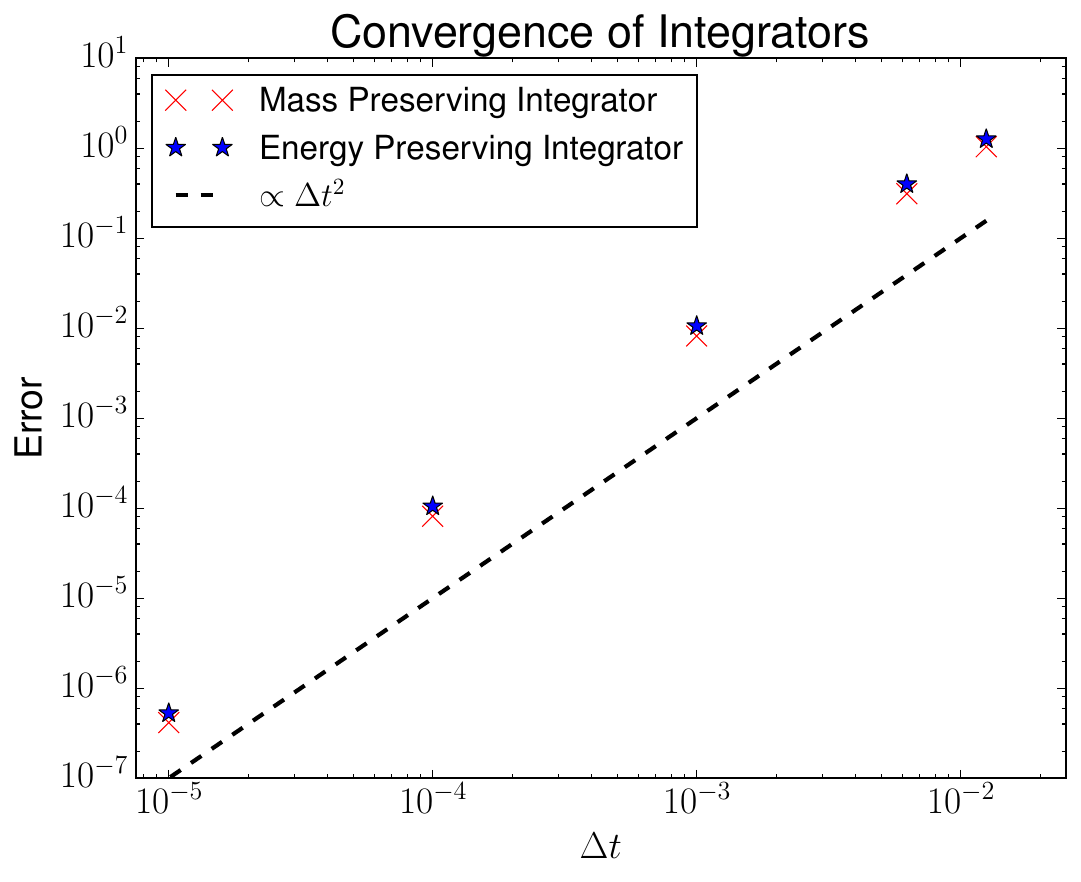}
  \caption{The error of integrators \eqref{e:mass_scheme} and
    \eqref{e:eng_scheme} compared to a high quality RK4 solution
    computed with $\Delta t=10^{-4}$, \eqref{e:error1}.}
  \label{f:convergence}
\end{figure}

\begin{table}
  \caption{Error, \eqref{e:error1}, computed using the
    surrogate RK4 solution.}
  \label{t:err1}
  \begin{tabular}{ |l|c|c|c|c|c| } 
    \hline  
    $\Delta t$& Mass & Energy & Trapezoidal& Implicit Midpoint & Projection \\
    \hline\hline 
    0.1 & 0.18 & 0.20 & 0.19 & 0.20 & 0.11 \\
    \hline 
    0.05 & 0.06 & 0.07 & 0.07 & 0.07 & 0.02 \\
    \hline
    0.025 & 0.02 & 0.02 & 0.02 & 0.02 & 2.32$\cdot10^{-3}$ \\
    \hline
    0.0125 & 5.05$\cdot10^{-3}$ & 5.85$\cdot10^{-3}$ & 5.56$\cdot10^{-3}$ & 5.56$\cdot10^{-3}$ & 3.06$\cdot10^{-4}$ \\
    \hline
  \end{tabular}
\end{table}

\begin{table}
  \caption{Relative error in the mass invariant, $\max_n |\calM[\bb_n]-\calM|/\calM$.}
  \label{t:mass1}
  \begin{tabular}{ |l|c|c|c|c|c| } 
    \hline  
    $\Delta t$& Mass & Energy & Trapezoidal& Implicit Midpoint & Projection \\
    \hline\hline 
    0.1 & 2.84$\cdot10^{-16}$ & 1.59$\cdot10^{-4}$ & 3.82$\cdot10^{-4}$ & 7.11$\cdot10^{-16}$ & 1.99$\cdot10^{-15}$ \\
    \hline 
    0.05 & 4.26$\cdot10^{-16}$ & 3.87$\cdot10^{-5}$ & 1.53$\cdot10^{-4}$ & 2.84$\cdot10^{-16}$ & 2.13$\cdot10^{-15}$ \\
    \hline
    0.025 & 7.11$\cdot10^{-16}$ & 9.60$\cdot10^{-6}$ & 4.69$\cdot10^{-5}$ & 5.68$\cdot10^{-16}$ & 3.41$\cdot10^{-15}$ \\
    \hline
    0.0125 & 5.68$\cdot10^{-16}$ & 2.39$\cdot10^{-6}$ & 1.26$\cdot10^{-5}$ & 4.26$\cdot10^{-16}$ & 6.54$\cdot10^{-15}$ \\
    \hline
  \end{tabular}
\end{table}

\begin{table}
  \caption{Relative error in the energy invariant, $\max_n |\calH[\bb_n]-\calH|/\calH$.}
  \label{t:eng1}
  \begin{tabular}{ |l|c|c|c|c|c| } 
    \hline  
    $\Delta t$& Mass & Energy & Trapezoidal& Implicit Midpoint & Projection \\
    \hline\hline 
    0.1 & 9.33$\cdot10^{-4}$ & 1.85$\cdot10^{-15}$ & 4.43$\cdot10^{-3}$ & 2.51$\cdot10^{-3}$ & 5.54$\cdot10^{-15}$ \\
    \hline 
    0.05 & 4.87$\cdot10^{-4}$ & 1.71$\cdot10^{-15}$ & 2.07$\cdot10^{-3}$ & 1.09$\cdot10^{-3}$ & 3.68$\cdot10^{-14}$\\
    \hline
    0.025 & 1.75$\cdot10^{-4}$ & 1.85$\cdot10^{-15}$ & 6.96$\cdot10^{-4}$ & 3.53$\cdot10^{-4}$ & 1.14$\cdot10^{-15}$ \\
    \hline
    0.0125 & 5.00$\cdot10^{-5}$ & 1.85$\cdot10^{-15}$ & 1.95$\cdot10^{-4}$ & 9.77$\cdot10^{-5}$ & 2.88$\cdot10^{-14}$\\
    \hline
  \end{tabular}
\end{table}

\subsection{Performance of Nonlinear Solvers}
\label{s:newton}

As our proposed methods require solving a nonlinear system at each
time step, the number of required Newton iterations bears
consideration.  Here, we repeat the numerical experiment of Section
\ref{s:pointwise}, solving \eqref{e:ic_shock} with $N=100$, and
integrate out to $t_{\max}=1$.  The results, with the same tolerances,
are given in Tables \ref{t:feval1} and \ref{t:linear1}. The Mass,
Energy, Trapezoidal Rule, and Implicit Midpoint solvers had comparable
performance, with no discernible advantages.  Between four and six
function evaluations are needed for these solvers per time step for
each of these.  In contrast, the Projection method typically took
fewer function evaluations, but requires the additional four function
evaluations from RK4.  Thus, as measured by the number of function
evaluations, these solvers, and RK4, are quite comparable.

A modest number of between three and five iterations of the Newton
solver are needed for the implicit solvers, while the projection
method typically requiring only two to three.  The Projection method
has the significant advantage of requiring a much smaller system to be
solved.  For all of solvers, analytic Jacobians were provided which
offered a significant reduction in the number of function evaluations.



\begin{table}
  \caption{Average number of function evaluations
    needed per time step.}
  \label{t:feval1}
  \begin{tabular}{ |l|c|c|c|c|c| } 
    \hline  
    $\Delta t$& Mass & Energy & Trapezoidal& Implicit Midpoint & Projection \\
    \hline\hline 
    0.1 & 5.00 & 5.00 & 5.00 & 5.00 & 4.00\\
    \hline 
    0.05 & 4.62 & 5.00 & 5.70 & 5.70 & 3.35\\
    \hline
    0.025 & 5.00 & 5.00 & 5.00 & 5.00 & 3.00\\
    \hline
    0.0125 & 5.00 & 5.00 & 5.00 & 5.00 & 3.00\\
    \hline
  \end{tabular}
\end{table}

\begin{table}
  \caption{Average number of Newton iterations
    needed per time step.}
  \label{t:linear1}
  \begin{tabular}{ |l|c|c|c|c|c| } 
    \hline  
    $\Delta t$& Mass & Energy & Trapezoidal& Implicit Midpoint & Projection \\
    \hline\hline 
    0.1 & 4.00 & 4.00 & 4.00 & 4.00 & 3.00\\
    \hline 
    0.05 & 3.62 & 4.00 & 4.70 & 4.70 & 2.35\\
    \hline
    0.025 & 4.00 & 4.00 & 4.00 & 4.00 & 2.00\\
    \hline
    0.0125 & 4.00 & 4.00 & 4.00 & 4.00 & 2.00\\
    \hline
  \end{tabular}
\end{table}




\subsection{Ensemble Simulations and Weak Turbulence}

One way to measure energy transfer in \eqref{e:toy} is through the
Sobolev type $h^s$ norm
\begin{equation}
  \label{e:hsnorm}
  \|\bb(t)\|_{h^s}^2 = \sum_{j=1}^N 2^{(s-1)j} |b_j(t)|^2.
\end{equation}
This is closely related to the measurements for energy transfer in
\cite{Colliander:2010wz,Colliander:2013hz}.  For a single initial
condition, we may find that $h^s$ norm grows in time, but for a
phenomenon to constitute weak turbulence, we would expect such a
transfer to be generic.  Thus, we simulate an ensemble of initial
conditions and examine the average evolution of \eqref{e:hsnorm}.

Our ensemble of initial conditions are constructed as follows.  For
the $k$-th sample,
\begin{equation}
  \label{e:ensemble_ic}
  b_j^{(k)}(0) = \frac{1}{4^{j-1}}\cdot \exp\{i \theta_j^{(k)}\}, \quad j=1,\ldots,N,
\end{equation}
where $ \theta_j^{(k)}\sim U(0, 2\pi)$ are independently and
identically distributed.  The purpose of the decay in
\eqref{e:ensemble_ic} is such that when we look at the large $N$
limit, the $h^s$ norms remain finite for $s\leq 4$.  Our random phases
were generated in parallel using the Scalable Parallel Random Number
Generator (SPRNG) 2.0, \cite{mascagni2000algorithm}, available at
\url{http://www.sprng.org}.  Our sample size in each case was one
hundred.

The results of several of our simulations are shown Figures
\ref{f:turb1} and \ref{f:turb2} where we plot the ensemble averaged
evolution of the $h^s$ norm,
\begin{equation}
  \label{e:avgnorm}
  \mean{\norm{\bb(t)}_{h^s}} = \frac{1}{M}\sum_{k=1}^M \norm{\bb^{(k)}(t)}_{h^s}
\end{equation}
for different integrators using different time steps.  First, in all
cases, there is a generic tendency for the norms to grow.  Second, as
shown in Figure \ref{f:turb1}, the conservative schemes show
consistent growth rates, independent of time step, out to
$t_{\max}=1000$.  One notable feature is that modified midpoint mass
preserving integrator, \eqref{e:mass_scheme}, appears to have a
comparatively larger variance than the other symmetric methods.

In Figure \ref{f:turb2}, we compare the conservative integrators to
each other, along with Trapezoidal Rule and RK4.  Again, there is
consistency in the the ensemble averaged behavior.  While we have only
plotted the results for $s=4$, this is consistent with the other norms
we have examined.

\begin{figure}

  \subfigure{\includegraphics[width=6.25cm]{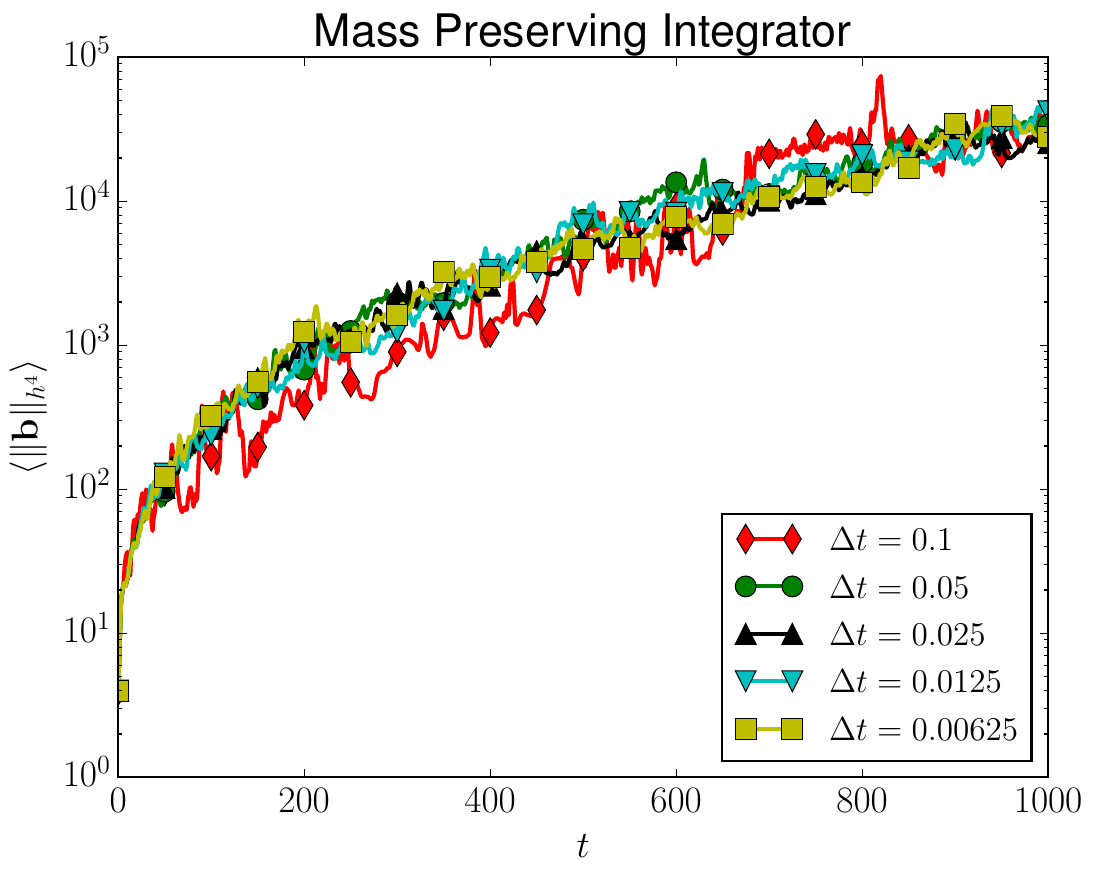}}
  \subfigure{\includegraphics[width=6.25cm]{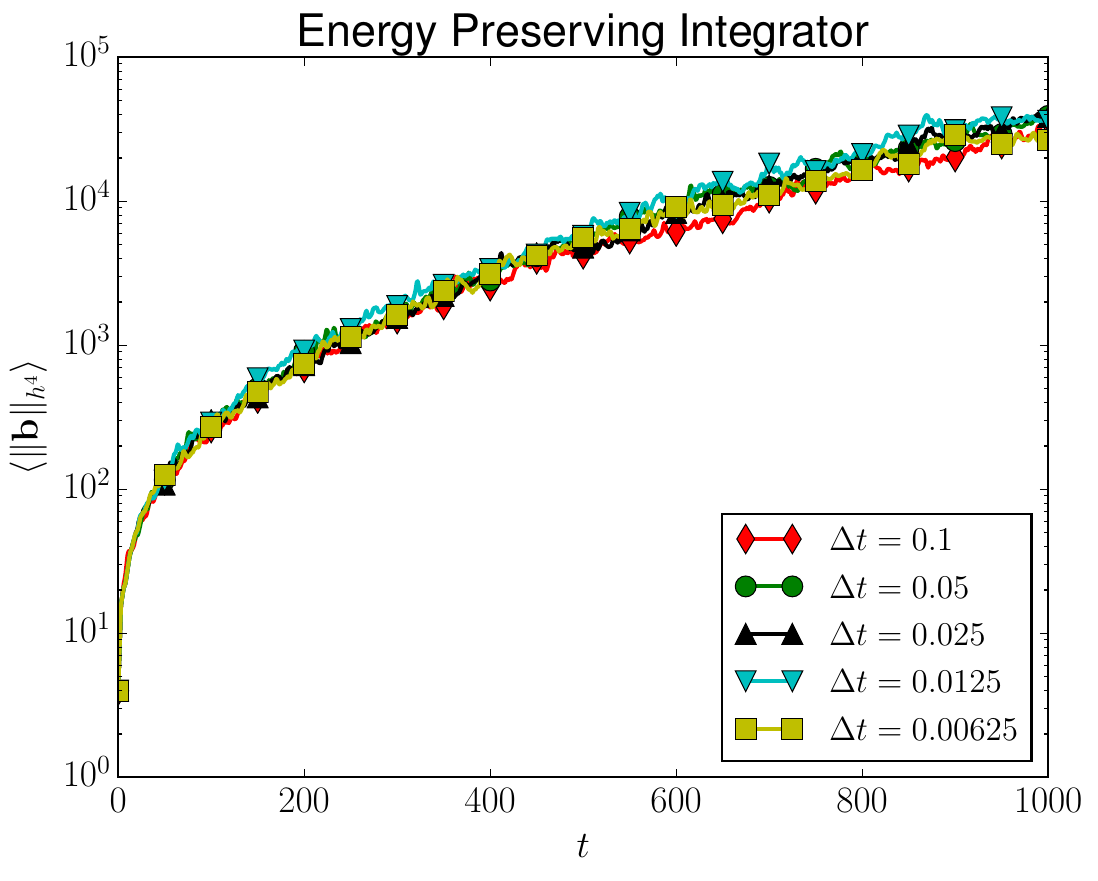}}
  \caption{Ensemble averaged norm evolution for
    two conservative integrators and different values of $\dt$.
    In all cases, the ensemble averages were consistent as a function
    of time.}
  \label{f:turb1}
\end{figure}

\begin{figure}

  \subfigure{\includegraphics[width=6.25cm]{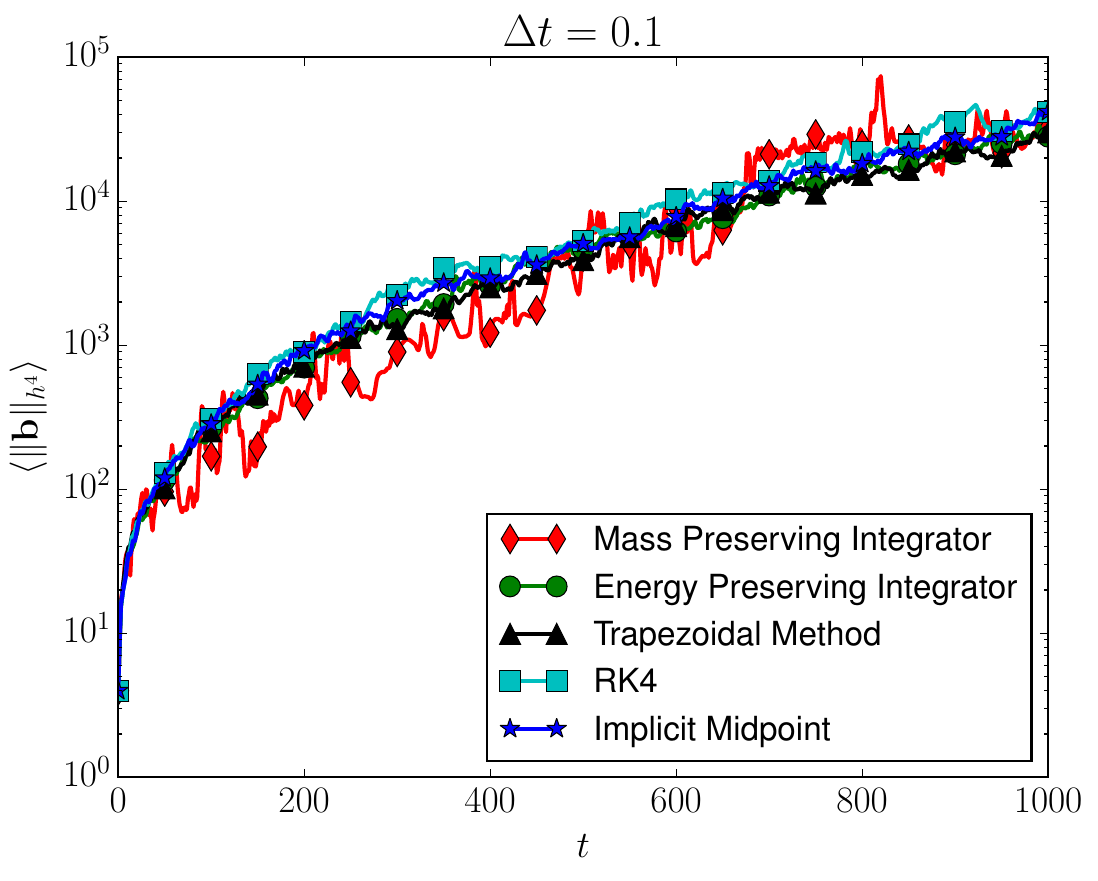}}
  \subfigure{\includegraphics[width=6.25cm]{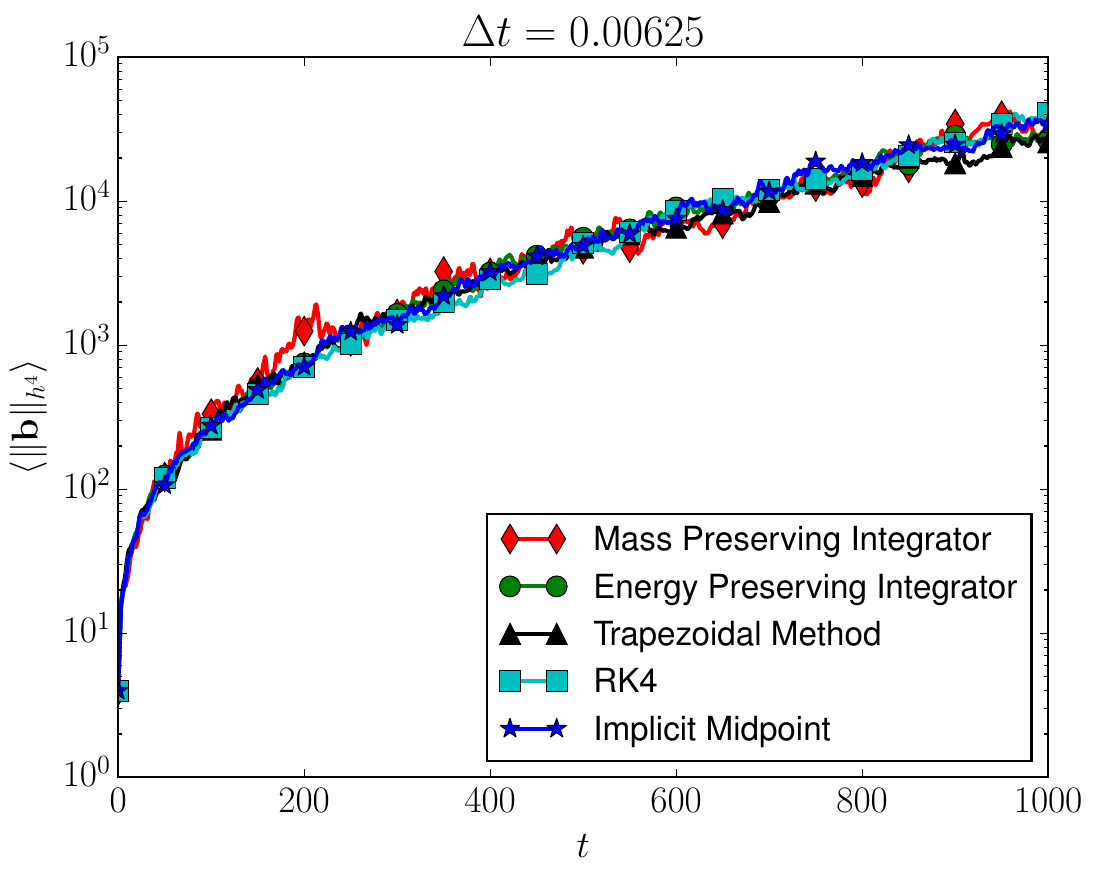}}
  \caption{Ensemble averaged norm evolution for
   several integrators.  On this time scale, the
    four methods are consistent at both values of $\dt$.}
  \label{f:turb2}
\end{figure}

On longer time scales, we see the advantage of our conservative
integrators.  Integrating out to $t_{\max}=10^5$, we see a systematic
bias in the norm, shown in Figure \ref{f:turb3}.  To better understand
this, we examine the ensemble averaged energy and mass invariants for
the four methods.  These are shown in Figure \ref{f:invarianterror}.
Our conservative integrators and trapezoidal rule behave well, while
the errors in mass and energy continue to grow when the RK4 method is
used.  Thus, on long time scales, fixed step Runge-Kutta methods will
give biased results.

The boundedness of the error in the energy of the symmetric schemes
which do not conserve energy, is unsurprising.  In particular,
implicit midpoint, being symplectic, will conserve some modified
Hamiltonian, $\tilde H$, which will be nearly preserved over very long
periods of integration and converge to $H$ as $\dt \to 0$,
\cite{Hairer:2006vg}.

\begin{figure}
  \includegraphics[width=8cm]{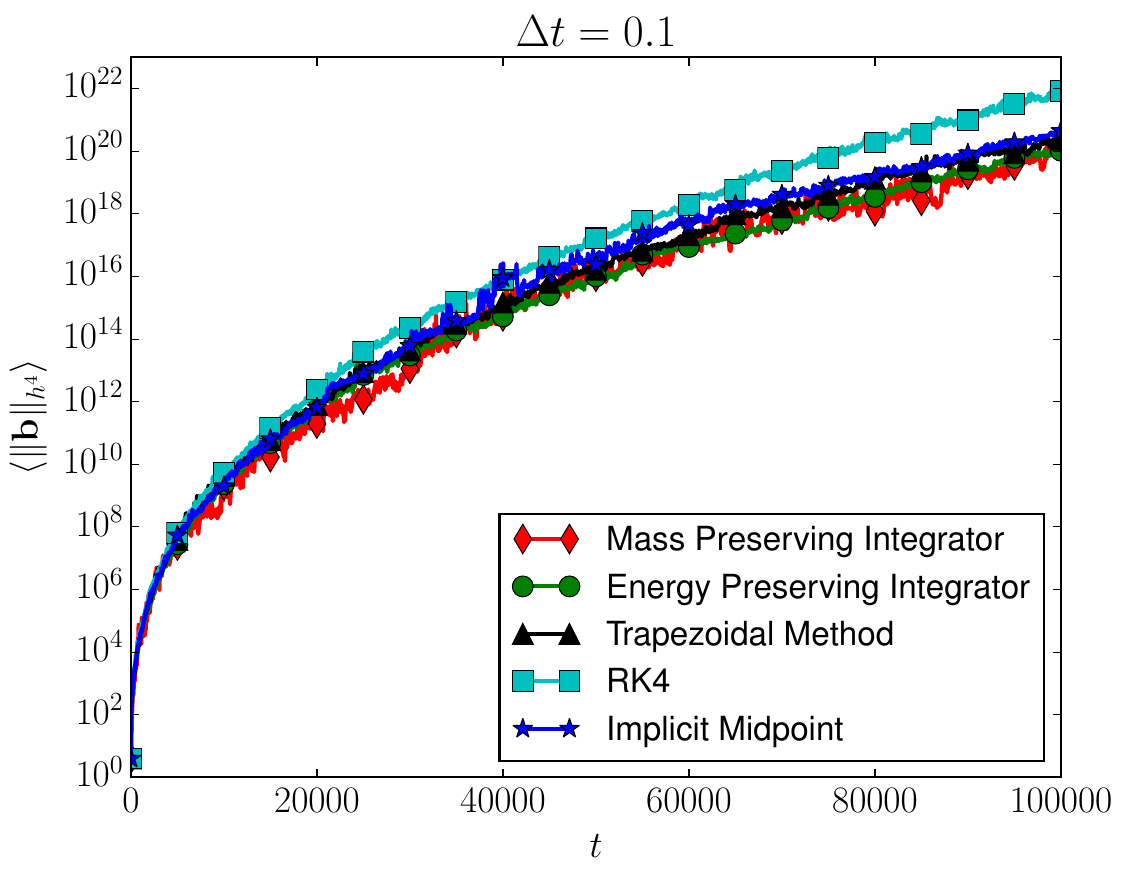}
  \caption{Ensemble averaged norm evolution of several of the
    integrators.  On longer time
    scales, a large systematic bias appears in the RK4 Solution.}
  \label{f:turb3}
\end{figure}

\begin{figure}
  \subfigure{\includegraphics[width=6.25cm]{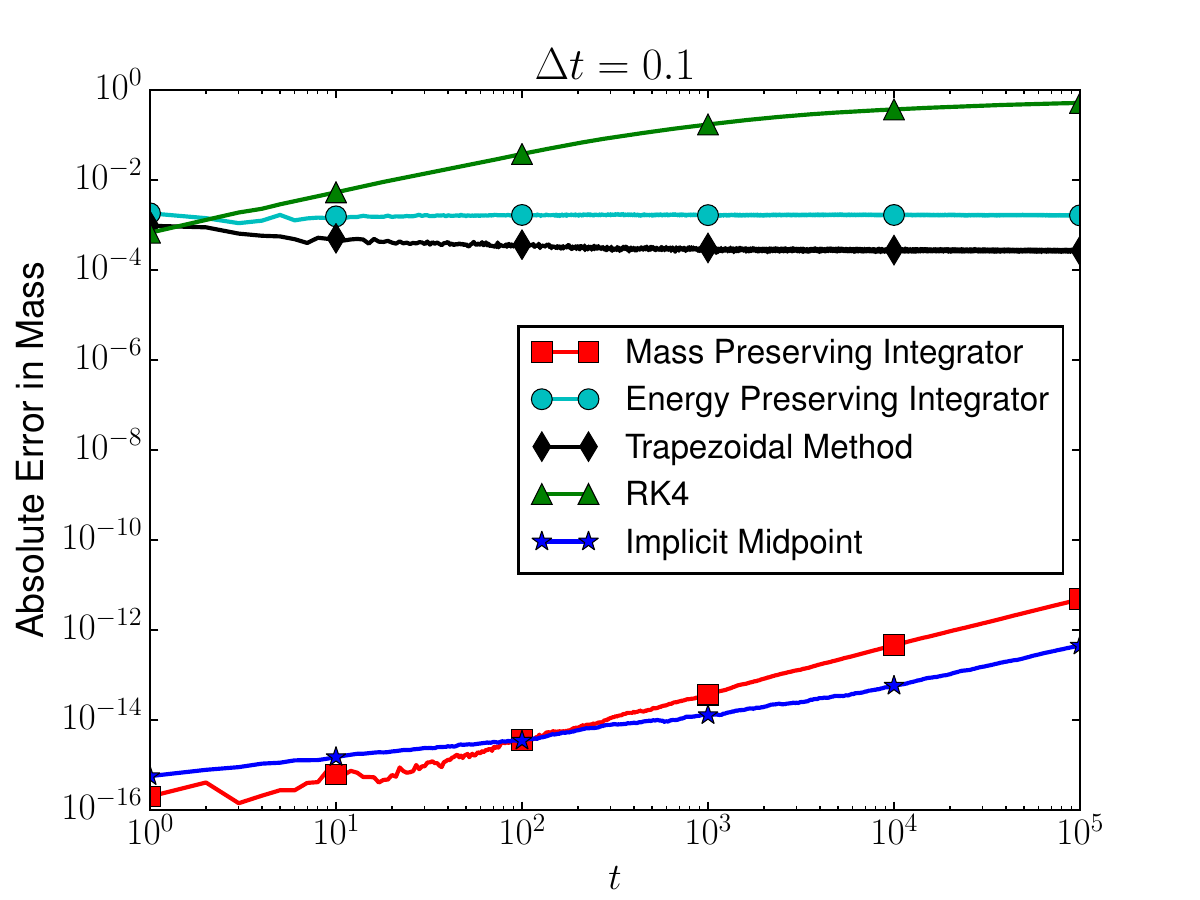}}
  \subfigure{\includegraphics[width=6.25cm]{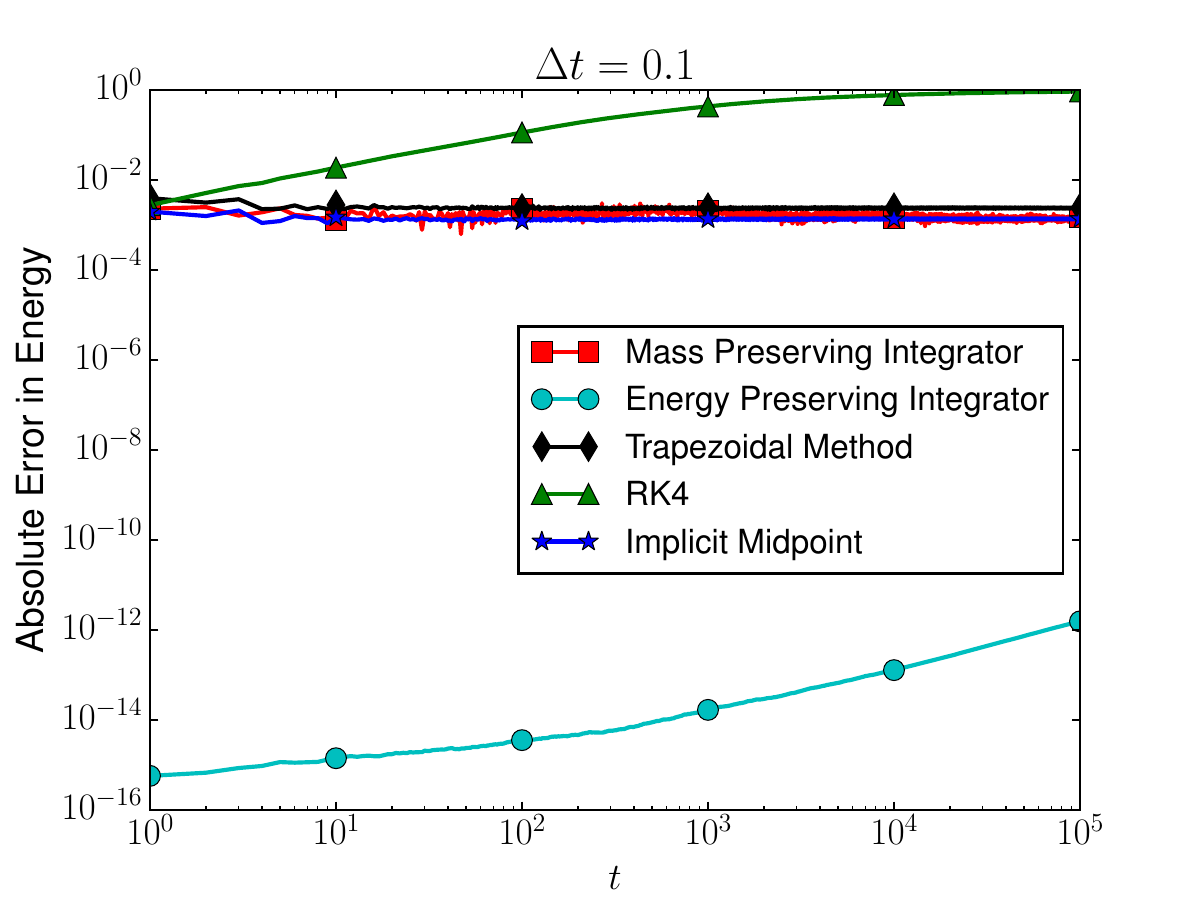}}
  \caption{The conservative integrators accurately preserve the
    appropriate invariants of the system. 
    Trapezoidal Rule is more consistent than RK4, which
    systematically adds energy to the system.}
  \label{f:invarianterror}
\end{figure}

\section{Discussion}
\label{s:discussion}

We have formulated integrators which conserve the invariants of the
Toy Model System, \eqref{e:toy}.  The local truncation error in both
cases is second order, and they provide robust behavior in
simulations.  However, we were only able to prove convergence of
schemes which conserve mass, as we needed an {\it a priori} bound on
the solution.  One outstanding question is thus to prove convergence
of the energy preserving scheme.  Another is to produce a method that
intrinsically preserves both invariants, without projection.

In comparison to other methods, these schemes are quite favorable,
both in terms of their properties and computational cost.  For large
scale, long time, statistical studies, they will inevitably perform
better than fixed step Runge-Kutta methods, though adaptive RK methods
may outperform them.

\bibliographystyle{abbrvnat}

\bibliography{toy}

\end{document}